\theoremstyle{plain}                                                                                         %
\newtheorem{thm}{Theorem}[section]                                                                         %
\newtheorem{prop}[thm]{Proposition}                                                                        %
\newtheorem{cor}[thm]{Corollary}                                                                           %
\theoremstyle{definition}                                                                                    %
\newtheorem{exam}[thm]{Example}                                                                         %
\theoremstyle{remark}                                                                                        %
\newtheorem{rmk}[thm]{Remark}
\pdfstringdef{\Title}{Operators on $\mathcal{C}(\Omega)\hat{\otimes}_{d_{p}}X$}                                 %
\pdfstringdef{\Author}{Fernando Mu\~{n}oz, Eve Oja, and C\'{a}ndido Pi\~{n}eiro}                                              %
\begin{document}

\baselineskip=17pt

\title[Operators on $\mathcal{C}_{p}(\Omega,X)$]{Operators on the Banach Space of $p$-continuous vector-valued functions}

\author{Fernando Mu\~{n}oz, Eve Oja, and C\'{a}ndido Pi\~{n}eiro}

\subjclass[2010]{ Primary: 47B38. Secondary: 46B25, 46B28, 46G10, 47B10.}

\keywords{Banach spaces, operators on tensor products, absolutely $(r,p)$-summing operators, continuous and $p$-continuous vector-valued functions, representing measure.}

\address{Departamento de Matem\'aticas, Facultad de Ciencias Experimentales, Universidad de Huelva, Campus Universitario de El Carmen, 21071 Huelva, Spain}

\email{fmjimenez@dmat.uhu.es}

\address{Faculty of Mathematics and Computer Science, University of Tartu, J. Liivi 2, 50409 Tartu, Estonia; Estonian Academy of Sciences, Kohtu 6, 10130 Tallinn, Estonia}

\email{eve.oja@ut.ee}

\address{Departamento de Matem\'aticas, Facultad de Ciencias Experimentales, Universidad de Huelva, Campus Universitario de El Carmen, 21071 Huelva, Spain}

\email{candido@uhu.es}

\maketitle
\begin{center}
\textit{{\small Dedicated to Professor Nicolae Dinculeanu on his ninetieth birthday}}
\end{center}

\begin{abstract}
  Let $X$, $Y$, and $Z$ be Banach spaces, and let $\alpha$ be a tensor norm. Let a bounded linear operator $S\in\mathcal{L}(Z,\mathcal{L}(X,Y))$ be given. We obtain (necessary and/or sufficient) conditions for the existence of an operator $U\in\mathcal{L}(Z\hat{\otimes}_{\alpha}X,Y)$ such that $(Sz)x = U(z\otimes x)$, for all $z\in Z$ and $x\in X$, i.e., $S= U^{\#}$, the associated operator to $U$. Let $\Omega$ be a compact Hausdorff space and denote by $\mathcal{C}(\Omega)$ the space of continuous functions from $\Omega$ into $\mathds{K}$. We apply these results to $S\in\mathcal{L}(\mathcal{C}(\Omega),\mathcal{L}(X, Y))$ for characterizing the existence of an operator $U\in\mathcal{L}(\mathcal{C}_{p}(\Omega,X),Y)$ such that $U^{\#}=S$, where $\mathcal{C}_{p}(\Omega,X)$ is the space of $p$-continuous $X$-valued functions, $1\leq p \leq \infty$.
\end{abstract}

\section{Introduction}\label{s1}

Let $X$ and $Y$ be Banach spaces and let $\Omega$ be a compact Hausdorff space. The space of continuous functions from $\Omega$ into $X$ ($\mathds{K}$, respectively) is denoted by $\mathcal{C}(\Omega,X)$ ($\mathcal{C}(\Omega)$, respectively). Let $\mathcal{L}(X,Y)$ denote the Banach space of bounded linear operators from $X$ into $Y$. For every operator $U\in\mathcal{L}(\mathcal{C}(\Omega,X), Y)$, we denote by $U^{\#}$ the \emph{associated operator} from $\mathcal{C}(\Omega)$ to $\mathcal{L}(X,Y)$ defined by $(U^{\#}\varphi)x = U(\varphi x)$, $\varphi\in\mathcal{C}(\Omega)$ and $x\in X$. (The notation $U^{\#}$ is traditional; see, e.g., \cite{M-SS, Po1, Po2, Po3, Po4, SS, Sw}.) Then, clearly, $U^{\#}\in\mathcal{L}(\mathcal{C}(\Omega),\mathcal{L}(X,Y))$.

On the other hand, in a short remark (see \cite[Remark, p. 379]{D}), Dinculeanu pointed out that there exist operators $S\in\mathcal{L}(\mathcal{C}(\Omega),\mathcal{L}(X, Y))$ which are not associated to any $U\in\mathcal{L}(\mathcal{C}(\Omega, X),Y)$, meaning that $S \neq U^{\#}$ for all operators $U\in\mathcal{L}(\mathcal{C}(\Omega, X),Y)$. (In \cite{D}, $U^{\#}$ is denoted by $U'$.) Professor Dinculeanu kindly informed us (personal communication, September 27, 2015) that his remark was just a conjecture based on Grothendieck's result quoted in Remark \ref{r:2.4} below.

Many authors have studied the interplay between $U$ and $U^{\#}$ for different classes of operators (see, e.g., the above references). However, it seems that nothing (apart from Dinculeanu's remark) has been said about the problem of the existence of an operator $U$ such that $U^{\#}=S$ for a given operator $S$.

This paper aims in studying this existence problem and, in particular, in proving Dinculeanu's conjecture. However, we shall study the problem in a more general context of operators defined on the Banach space $\mathcal{C}_{p}(\Omega,X)$ of $p$-continuous $X$-valued functions (see Section \ref{s3} for the definition and references). Since $\mathcal{C}_{\infty}(\Omega,X) = \mathcal{C}(\Omega,X)$, this also encompasses the classical case of operators on $\mathcal{C}(\Omega,X)$.

By Grothendieck's classics \cite{G2} (see, e.g., \cite[pp. 49--50]{R}), we know that
\[
\mathcal{C}(\Omega,X) = \mathcal{C}(\Omega)\hat{\otimes}_{\varepsilon}X,
\]
where $\varepsilon$ denotes the injective tensor norm, under the canonical isometric isomorphism $\varphi x \leftrightarrow \varphi\otimes x$, $\varphi\in\mathcal{C}(\Omega)$ and $x\in X$. As is well known, this allows to extend the definition of $U^{\#}$ as follows.

Let $Z$ be a Banach space and let $\alpha$ be a tensor norm. If $U\in\mathcal{L}(Z\hat{\otimes}_{\alpha}X,Y)$, then the operator $U^{\#}\in\mathcal{L}(Z,\mathcal{L}(X,Y))$ \emph{associated to} $U$ is defined by $(U^{\#}z)x = U(z\otimes x)$, $z\in Z$ and $x\in X$. By a recent result of the authors \cite[Theorem 3.8]{MOP1},
\[
\mathcal{C}_{p}(\Omega,X)=\mathcal{C}(\Omega)\hat{\otimes}_{d_{p}}X,
\]
where $d_{p}$ denotes the right Chevet--Saphar tensor norm (see \cite{S} or, e.g., \cite[Chapter 6]{R}). Keeping this in mind, we shall study the existence problem in the general context of operators defined on tensor products of Banach spaces. In particular, we shall see that examples, proving Dinculeanu's conjecture, come out on the all three levels of the generality (see Remarks \ref{r:2.2} and \ref{r:2.4}, Corollary \ref{c:4equiv}, Proposition \ref{p:condnecesuf}).

Let $Z$, $X$, and $\alpha$ be as above. In Section \ref{s3}, we prove a general omnibus theorem (Theorem \ref{t:4equiv-2-Eve}), which provides three equivalent conditions for the existence of $U\in\mathcal{L}(Z\hat{\otimes}_{\alpha}X,Y)$ such that $S=U^{\#}$ for every Banach space $Y$ and every operator $S\in\mathcal{L}(Z,\mathcal{L}(X,Y))$. The main applications (Theorem \ref{c:3equiv-2} and Corollaries \ref{p:cotype2} and \ref{p:cotypeq}) concern the case of $p$-continuous  $X$-valued functions $\mathcal{C}_{p}(\Omega,X) = \mathcal{C}(\Omega)\hat{\otimes}_{d_{p}}X$.

In Section \ref{s2}, we fix Banach spaces $Z$, $X$, and $Y$, and a tensor norm $d_{p}$. We prove another omnibus theorem (Theorem \ref{t:3equiv}), which provides four equivalent conditions for a given operator $S\in\mathcal{L}(Z,\mathcal{L}(X,Y))$ to be the associated operator to an operator $U\in\mathcal{L}(Z\hat{\otimes}_{d_{p}}X,Y)$. Again, the main application (Corollary \ref{c:4equiv}) concerns $\mathcal{C}_{p}(\Omega,X)$, yielding conditions that seem to be new even in the classical case $\mathcal{C}(\Omega,X) = \mathcal{C}_{\infty}(\Omega,X)$.

In Section \ref{s5}, we are given an operator $S\in\mathcal{L}(\mathcal{C}(\Omega),\mathcal{L}(X,Y))$. We present a necessary condition for the existence of $U\in\mathcal{L}(\mathcal{C}_{p}(\Omega,X),Y)$ such that $S=U^{\#}$ (Proposition \ref{p:condnecess}), which becomes also sufficient in the case $\mathcal{C}(\Omega,X) = \mathcal{C}_{\infty}(\Omega,X)$ (Proposition \ref{p:condnecesuf}). This condition is expressed in terms of the representing measure of $S$, so we build the representing measure of such kind of operators.

Section \ref{s4} provides three examples (concerning Corollaries \ref{p:cotypeq}, \ref{Uexist}, and Proposition \ref{p:condnecess}) in order to show that our results are sharp in general.

Our notation is standard. Let $1\leq p \leq \infty$, and denote by $p'$ the conjugate index of $p$ (i.e., $1/p + 1/p' = 1$ with the convention $1/\infty = 0$). We consider Banach spaces over the same, either real or complex, field $\mathds{K}$. The closed unit ball of $X$ is denoted by $B_{X}$. The Banach space of all \emph{absolutely $p$-summable sequences} in $X$ is denoted by $\ell_{p}(X)$ and its norm by $\|\cdot\|_{p}$. By $\ell_{p}^{w}(X)$ we mean the Banach space of \emph{weakly $p$-summable sequences} in $X$ with the norm $\|\cdot\|_{p}^{w}$ (see, e.g., \cite[pp. 32--33]{DJT}). Denote by $\ell_{p}^{u}(X)$ the Banach space of all \emph{unconditionally $p$-summable sequences} in $X$, which is the closed subspace of $\ell_{p}^{w}(X)$ formed by the sequences $(x_{n})\in\ell_{p}^{w}(X)$ satisfying $(x_{n}) = \lim_{N\rightarrow\infty}(x_{1},\ldots,x_{N},0,0,\ldots)$ in $\ell_{p}^{w}(X)$ (see \cite{FS1} or, e.g., \cite[8.2, 8.3]{DF}). We refer to Pietsch's book \cite{P} for the theory of operator ideals and, in particular, to the book \cite{DJT} by Diestel, Jarchow, and Tonge for absolutely $(r,q)$- and $q$-summing operators. Our main reference on the theory of tensor norms and related Banach operator ideals is the book of Ryan \cite{R}.

\section{Characterizing associated operators: ``global'' case}\label{s3}

Let $X$, $Y$, and $Z$ be Banach spaces, and let $\pi$ be the projective tensor norm. It is well known that every operator $U\in\mathcal{L}(Z\otimes_{\pi}X,Y)$ induces an \emph{associated operator} $U^{\#}\in\mathcal{L}(Z,\mathcal{L}(X,Y))$ by
\[
(U^{\#}z)x = U(z\otimes x), \, z\in Z \mbox{ and } x\in X.
\]
It is also well known and easy to verify that the correspondence $U\mapsto U^{\#}$ is an isometric isomorphism between the Banach spaces $\mathcal{L}(Z\otimes_{\pi}X,Y) = \mathcal{L}(Z\hat{\otimes}_{\pi}X,Y)$ and $\mathcal{L}(Z,\mathcal{L}(X,Y))$. In particular, every $S\in\mathcal{L}(Z,\mathcal{L}(X,Y))$ happens to be the associated operator to some $U\in\mathcal{L}(Z\hat{\otimes}_{\pi}X,Y)$.

In the special case when $Y=\mathds{K}$, the operators $U$ and $U^{\#}$ are canonically identified, and the corresponding identification
\[
(Z\hat{\otimes}_{\pi}X)^{*} = \mathcal{L}(Z,X^{*})
\]
(as Banach spaces) uses the duality
\[
\langle S, \sum_{i=1}^{n}z_{i}\otimes x_{i} \rangle = \sum_{i=1}^{n}(Sz_{i})x_{i},
\]
i.e., $S=U^{\#}$ is identified with $U$.

The same phenomenon occurs for any tensor norm $\alpha$: thanks to Grothendieck \cite{G} (see, e.g., \cite[pp. 187--190]{R}), one has the canonical identification
\[
(Z\hat{\otimes}_{\alpha}X)^{*} = \mathcal{A}(Z, X^{*})
\]
(as Banach spaces), where $\mathcal{A}$ is the Banach operator ideal of the $\alpha'$-integral operators. Following \cite{OR}, let us say that $\mathcal{A}$ is the \emph{dual space operator ideal} of $\alpha$. (Note that in \cite{O2}, the dual space operator ideal was defined differently, but in a symmetric way.)

Let $\alpha$ be a tensor norm. Since $\alpha(u)\leq\pi(u)$, $u\in Z\otimes X$, we have $\mathcal{L}(Z\hat{\otimes}_{\alpha}X, Y)\subset\mathcal{L}(Z\hat{\otimes}_{\pi}X, Y)$. Therefore the associated operator $U^{\#}\in\mathcal{L}(Z,\mathcal{L}(X,Y))$ is defined for any $U\in\mathcal{L}(Z\hat{\otimes}_{\alpha}X, Y)$, and, in particular, for any $U\in\mathcal{A}(Z\hat{\otimes}_{\alpha}X, Y)$, where $\mathcal{A}=(\mathcal{A}, \|\cdot\|_{\mathcal{A}})$ is an \emph{arbitrary} Banach operator ideal. However, in this case, in general, not all $S\in\mathcal{L}(Z,\mathcal{L}(X,Y))$ enjoy the ``privilege'' of being associated to some $U\in\mathcal{L}(Z\hat{\otimes}_{\alpha}X, Y)$. Theorem \ref{t:4equiv-2-Eve} below will give three equivalent conditions for the existence of such $U\in\mathcal{L}(Z\hat{\otimes}_{\alpha}X,Y)$ for every Banach space $Y$ and every operator $S\in\mathcal{L}(Z,\mathcal{L}(X,Y))$.

\begin{thm} \label{t:4equiv-2-Eve}
Let $X$ and $Z$ be Banach spaces. Let $\alpha$ be a tensor norm and let $\mathcal{A}$ be the dual space operator ideal of $\alpha$. The following statements are equivalent.

$($\emph{a}$)$ For every Banach space $Y$ and for every operator $S\in\mathcal{L}(Z,\mathcal{L}(X,Y))$, there exists $U\in\mathcal{L}(Z\hat{\otimes}_{\alpha}X,Y)$ such that $U^{\#} = S$.

$($\emph{b}$)$ There exists a Banach space $Y\neq \{0\}$ such that for every operator $S\in\mathcal{L}(Z,\mathcal{L}(X,Y))$, there exists $U\in\mathcal{L}(Z\hat{\otimes}_{\alpha}X,Y)$ such that $U^{\#} = S$.

$($\emph{c}$)$ $\mathcal{L}(Z,X^{*}) = \mathcal{A}(Z,X^{*})$ as sets.

$($\emph{d}$)$ The tensor norms $\alpha$ and $\pi$ are equivalent on $Z\otimes X$.
\end{thm}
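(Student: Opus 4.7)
The plan is to run the cycle $(\mathrm{a}) \Rightarrow (\mathrm{b}) \Rightarrow (\mathrm{c}) \Leftrightarrow (\mathrm{d}) \Rightarrow (\mathrm{a})$. Two of these links are immediate. The implication $(\mathrm{a}) \Rightarrow (\mathrm{b})$ is trivial: take any $Y \neq \{0\}$, for instance $Y = \mathds{K}$. For $(\mathrm{d}) \Rightarrow (\mathrm{a})$, if $\alpha$ and $\pi$ are equivalent on $Z \otimes X$, then $Z\hat{\otimes}_{\alpha}X = Z\hat{\otimes}_{\pi}X$ as sets with equivalent norms, hence $\mathcal{L}(Z\hat{\otimes}_{\alpha}X, Y) = \mathcal{L}(Z\hat{\otimes}_{\pi}X, Y)$, and the isometric identification of this space with $\mathcal{L}(Z,\mathcal{L}(X,Y))$ recalled at the beginning of this section produces the desired $U$.

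The equivalence $(\mathrm{c}) \Leftrightarrow (\mathrm{d})$ is pure duality, run through Grothendieck's isometric identifications $(Z\hat{\otimes}_{\alpha}X)^{*} = \mathcal{A}(Z,X^{*})$ and $(Z\hat{\otimes}_{\pi}X)^{*} = \mathcal{L}(Z,X^{*})$. Since $\alpha \leq \pi$, the natural inclusion $\mathcal{A}(Z,X^{*}) \hookrightarrow \mathcal{L}(Z,X^{*})$ is continuous of norm $\leq 1$; equality as sets upgrades this to equivalence of the two norms by the closed graph theorem. Writing
\[
\alpha(u) = \sup\{|\langle T, u \rangle| : T \in B_{\mathcal{A}(Z,X^{*})}\}, \qquad \pi(u) = \sup\{|\langle T, u \rangle| : T \in B_{\mathcal{L}(Z,X^{*})}\}
\]
by Hahn--Banach and transferring the norm equivalence gives $\alpha \sim \pi$ on $Z \otimes X$, while the reverse reasoning handles $(\mathrm{d}) \Rightarrow (\mathrm{c})$.

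The heart of the proof is $(\mathrm{b}) \Rightarrow (\mathrm{c})$, which I would attack with a rank-one trick. Given $T \in \mathcal{L}(Z, X^{*})$, fix $y_{0} \in Y \setminus \{0\}$ and $y_{0}^{*} \in Y^{*}$ with $y_{0}^{*}(y_{0}) = 1$, and set
\[
(Sz)x := (Tz)(x)\, y_{0}, \qquad z \in Z, \; x \in X,
\]
which defines $S \in \mathcal{L}(Z, \mathcal{L}(X, Y))$. Hypothesis $(\mathrm{b})$ supplies $U \in \mathcal{L}(Z\hat{\otimes}_{\alpha}X, Y)$ with $U^{\#} = S$; then $y_{0}^{*} \circ U \in (Z\hat{\otimes}_{\alpha}X)^{*} = \mathcal{A}(Z, X^{*})$, and the computation $y_{0}^{*}(U(z \otimes x)) = y_{0}^{*}((Sz)x) = (Tz)(x)$ identifies the operator corresponding to $y_{0}^{*} \circ U$ under Grothendieck's duality as $T$ itself, whence $T \in \mathcal{A}(Z, X^{*})$. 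The only step requiring any real insight is this choice of a rank-one-valued $S$ built from an arbitrary $T \in \mathcal{L}(Z, X^{*})$; everything else reduces to Grothendieck's duality and the closed graph theorem.
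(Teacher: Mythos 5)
Your proposal is correct and follows essentially the same route as the paper: the same rank-one construction $(Sz)x=(Tz)(x)\,y_{0}$ composed with $y_{0}^{*}$ for (b)$\Rightarrow$(c), the same two-comparable-complete-norms argument via Grothendieck's dual identifications for (c)$\Rightarrow$(d), and the same reduction to the $\pi$-case for (d)$\Rightarrow$(a). The only cosmetic difference is that you also note the (redundant) implication (d)$\Rightarrow$(c), which the cycle already supplies.
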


\begin{proof}
(a)$\Rightarrow$(b). This is trivial.

(b)$\Rightarrow$(c). Fix $y_{0}\in Y$ and $y_{0}^{*}\in Y^{*}$ satisfying $\|y_{0}\| = \|y_{0}^{*}\| = y_{0}^{*}(y_{0}) = 1$. Let $A\in\mathcal{L}(Z,X^{*})$. Define $S:Z\rightarrow \mathcal{L}(X,Y)$ by $Sz = Az\otimes y_{0}\in\mathcal{F}(X,Y)$. Then $S\in\mathcal{L}(Z,\mathcal{L}(X,Y))$ and there exists $U\in\mathcal{L}(Z\hat{\otimes}_{\alpha}X,Y)$ such that $U^{\#}=S$. Let us consider
\[
y_{0}^{*}U\in\mathcal{L}(Z\hat{\otimes}_{\alpha}X,\mathds{K}) = (Z\hat{\otimes}_{\alpha}X)^{*} = \mathcal{A}(Z,X^{*}).
\]
Since
\[
(y_{0}^{*}U)(z\otimes x) = y_{0}^{*}(U(z\otimes x)) = y_{0}^{*}((Sz)x)
\]
\[
= y_{0}^{*}((Az)(x)y_{0}) = (Az)(x),\,\,x\in X, z\in Z,
\]
$y_{0}^{*}U = A$; hence $A\in\mathcal{A}(Z,X^{*})$.

(c)$\Rightarrow$(d). We know that
\[
(Z\otimes_{\pi}X)^{*} = (\mathcal{L}(Z,X^{*}),\|\cdot\|)\,\,\mbox{ and }\,\,(Z\otimes_{\alpha}X)^{*} = (\mathcal{A}(Z,X^{*}),\|\cdot\|_{\mathcal{A}})
\]
as Banach spaces. By (c), the Banach space $(\mathcal{L}(Z,X^{*}),\|\cdot\|)$ also carries another complete norm $\|\cdot\|_{\mathcal{A}}$. Since, as is well known, $\|\cdot\|\leq\|\cdot\|_{\mathcal{A}}$, the norms $\|\cdot\|$ and $\|\cdot\|_{\mathcal{A}}$ are equivalent. Hence also $\pi$ and $\alpha$ are equivalent.

(d)$\Rightarrow$(a). Let $Y$ be a Banach space. By the canonical isometric isomorphism between $\mathcal{L}(Z,\mathcal{L}(X,Y))$ and $\mathcal{L}(Z\hat{\otimes}_{\pi}X,Y)$, for every $S\in\mathcal{L}(Z,\mathcal{L}(X,Y))$, there exists $U\in\mathcal{L}(Z\hat{\otimes}_{\pi}X,Y)$ such that $U^{\#}=S$. But $\mathcal{L}(Z\hat{\otimes}_{\pi}X,Y) = \mathcal{L}(Z\hat{\otimes}_{\alpha}X,Y)$ as sets (and isomorphic as Banach spaces), because $\pi$ and $\alpha$ are equivalent on $Z\otimes X$.
\end{proof}

\begin{rmk} \label{r:2.2}
The particular case of $\alpha=\varepsilon$ concerns one of the most famous long-standing conjectures in functional analysis. In \cite[p. 153]{G2} (see also \cite[Section 4.6]{G}), Grothendieck conjectured: if the injective tensor norm $\varepsilon$ and the projective tensor norm $\pi$ are equivalent on $Z\otimes X$, then $Z$ or $X$ must be finite dimensional. In 1981, Pisier \cite{Pis} constructed an infinite-dimensional separable Banach space $P$ such that $\varepsilon$ and $\pi$ are equivalent on  $P\otimes P$. Since $\varepsilon(u)\leq\alpha(u)\leq\pi(u)$, $u\in Z\otimes X$, all tensor norms are equivalent on $P\otimes P$. By Theorem \ref{t:4equiv-2-Eve}, for every tensor norm $\alpha$ and every Banach space $Y$, every operator $S\in\mathcal{L}(P,\mathcal{L}(P,Y))$ is associated to some $U\in\mathcal{L}(P\hat{\otimes}_{\alpha}P,Y)$.

In \cite[p. 153, Corollary 2]{G2} (see \cite[Corollary 3.1]{O2} for a different proof), it is proved that if $\varepsilon$ and $\pi$ are equivalent on $Z\otimes Z^{*}$, then $Z$ is finite dimensional. Again, by Theorem \ref{t:4equiv-2-Eve}, for every infinite-dimensional Banach space $Z$ and every Banach space $Y\neq \{0\}$, there exists an operator $S\in\mathcal{L}(Z,\mathcal{L}(Z^{*},Y))$ which is not associated to any operator $U\in\mathcal{L}(Z\hat{\otimes}_{\varepsilon} Z^{*}, Y)$. With $Z=\mathcal{C}(\Omega)$, this clearly can be used to prove Dinculeanu's conjecture (see the Introduction).
\end{rmk}

Let $1\leq p \leq \infty$. In \cite{MOP1}, the authors considered the Banach space of \emph{$p$-continuous} $X$-valued functions $\mathcal{C}_{p}(\Omega, X)$ formed by all $f\in\mathcal{C}(\Omega,X)$ such that $f(\Omega)$ is $p$-compact (i.e., there exists a sequence $(x_{n})\in\ell_{p}(X)$ (or $(x_{n})\in c_{0}(X)$ when $p=\infty$) such that $f(\Omega)\subset \{ \sum_{n}\alpha_{n}x_{n} \, : \, (\alpha_{n})\in B_{\ell_{p'}} \}$). It follows from properties of $p$-compactness (see, e.g., \cite{SK}) that $\mathcal{C}_{p}(\Omega, X) \subset \mathcal{C}_{q}(\Omega, X)$ if $p\leq q$, and $\mathcal{C}_{\infty}(\Omega, X) = \mathcal{C}(\Omega, X)$. The space $\mathcal{C}_{p}(\Omega, X)$ becomes a Banach space endowed with the norm
\[
\|f\|_{\mathcal{C}_{p}(\Omega, X)}=\inf \|(x_{n})\|_{p},
\]
where the infimum is taken over all sequences $(x_{n})\in\ell_{p}(X)$ (or $(x_{n})\in c_{0}(X)$ when $p=\infty$) such that $f(\Omega)\subset \{ \sum_{n}\alpha_{n}x_{n} \, : \, (\alpha_{n})\in B_{\ell_{p'}} \}$, and $\mathcal{C}_{\infty}(\Omega, X) = \mathcal{C}(\Omega, X)$ as Banach spaces (see \cite[Proposition 3.6]{MOP1}).

One of the main results of \cite{MOP1} is that (as was mentioned in the Introduction) $\mathcal{C}_{p}(\Omega, X) = \mathcal{C}(\Omega)\hat{\otimes}_{d_{p}}X$ as Banach spaces. It is known (see, e.g., \cite[p. 142]{R}) that the dual space operator ideal of the Chevet--Saphar tensor norm $d_{p}$ coincides with $\mathcal{P}_{p'}$, where $\mathcal{P}_{q}$, $1\leq q\leq \infty$, denotes the Banach operator ideal of absolutely $q$-summing operators. This leads us to the following immediate application of Theorem \ref{t:4equiv-2-Eve}.

\begin{thm} \label{c:3equiv-2}
Let $X$ be a Banach space and let $\Omega$ be a compact Hausdorff space. Let $1\leq p\leq\infty$. The following statements are equivalent.

$($\emph{a}$)$ For every Banach space $Y$ and for every operator $S\in\mathcal{L}(\mathcal{C}(\Omega), \mathcal{L}(X,Y))$, there exists $U\in\mathcal{L}(\mathcal{C}_{p}(\Omega,X), Y)$ such that $U^{\#} = S$.

$($\emph{b}$)$ There exists a Banach space $Y\neq \{0\}$ such that for every operator $S\in\mathcal{L}(\mathcal{C}(\Omega),\mathcal{L}(X,Y))$, there exists $U\in\mathcal{L}(\mathcal{C}_{p}(\Omega,X),Y)$ such that $U^{\#} = S$.

$($\emph{c}$)$ $\mathcal{L}(\mathcal{C}(\Omega),X^{*}) = \mathcal{P}_{p'}(\mathcal{C}(\Omega),X^{*})$ as sets.

$($\emph{d}$)$ The tensor norms $d_{p}$ and $\pi$ are equivalent on $\mathcal{C}(\Omega)\otimes X$.
\end{thm}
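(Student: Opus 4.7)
The plan is to obtain Theorem \ref{c:3equiv-2} as an immediate specialization of Theorem \ref{t:4equiv-2-Eve} by taking $Z = \mathcal{C}(\Omega)$ and $\alpha = d_{p}$. Two structural identifications, both already recorded in the paragraph preceding the statement, do essentially all of the work.

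First, by \cite[Theorem 3.8]{MOP1} one has $\mathcal{C}_{p}(\Omega, X) = \mathcal{C}(\Omega) \hat{\otimes}_{d_{p}} X$ as Banach spaces, via the isomorphism that sends an elementary tensor $\varphi \otimes x$ to the function $\varphi x$. This preserves the associated-operator construction: an operator $U\in\mathcal{L}(\mathcal{C}_{p}(\Omega, X), Y)$ corresponds to a unique $\tilde U\in\mathcal{L}(\mathcal{C}(\Omega)\hat{\otimes}_{d_{p}}X, Y)$ with $\tilde U^{\#} = U^{\#}$. Consequently clauses (a), (b), and (d) of Theorem \ref{t:4equiv-2-Eve} transcribe verbatim into clauses (a), (b), and (d) of the present statement, modulo replacing $Z\hat{\otimes}_{\alpha}X$ by $\mathcal{C}_{p}(\Omega,X)$ in (a) and (b) and reading $\alpha = d_p$ in (d).

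Second, the dual space operator ideal of the right Chevet--Saphar tensor norm $d_{p}$ is $\mathcal{P}_{p'}$, the ideal of absolutely $p'$-summing operators (see, e.g., \cite[p.~142]{R}). Plugging $\mathcal{A} = \mathcal{P}_{p'}$ into clause (c) of Theorem \ref{t:4equiv-2-Eve} yields exactly clause (c) of the present theorem.

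With both identifications in place, the four equivalences follow at once from Theorem \ref{t:4equiv-2-Eve}. There is no real obstacle to overcome: the only point worth checking, and immediate from the explicit form of the isomorphism \cite[Theorem 3.8]{MOP1} on elementary tensors, is that this isomorphism does intertwine the notion of associated operator, which is what allows the back-and-forth passage between $U$ and $\tilde U$ above.
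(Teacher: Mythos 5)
Your proposal is correct and follows exactly the paper's route: the paper also obtains Theorem \ref{c:3equiv-2} as an immediate specialization of Theorem \ref{t:4equiv-2-Eve}, invoking $\mathcal{C}_{p}(\Omega,X)=\mathcal{C}(\Omega)\hat{\otimes}_{d_{p}}X$ from \cite[Theorem 3.8]{MOP1} and the fact that the dual space operator ideal of $d_{p}$ is $\mathcal{P}_{p'}$. No gaps.
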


\begin{rmk} \label{r:2.4}
As was mentioned, $\mathcal{C}(\Omega,X) = \mathcal{C}_{\infty}(\Omega,X)$. Saphar \cite[p. 99]{S} has shown
that $d_{\infty}$ coincides with $\varepsilon$ on $\mathcal{C}(\Omega)\otimes X$. But, thanks to Grothendieck \cite[p. 152, Proposition 33]{G2}, $\varepsilon$ and $\pi$ cannot be equivalent on $\mathcal{C}(\Omega)\otimes X$ when $X$ is
an infinite-dimensional Banach space (assuming, of course, that $\Omega$ is infinite). Thus, in this special case, Theorem \ref{c:3equiv-2} says that, whenever $Y \neq \{0\}$, there exists an operator $S\in\mathcal{L}(\mathcal{C}(\Omega), \mathcal{L}(X,Y))$ which is not associated to any $U\in\mathcal{L}(\mathcal{C}(\Omega,X),Y)$. This refines Dinculeanu's remark and proves his conjecture (see the Introduction).
\end{rmk}

Since $\mathcal{P}_{\infty} = \mathcal{L}$, the following is immediate from Theorem \ref{c:3equiv-2}.

\begin{cor} \label{c:C1}
Let $X$ be a Banach space and let $\Omega$ be a compact Hausdorff space. For every Banach space $Y$ and every operator $S\in\mathcal{L}(\mathcal{C}(\Omega), \mathcal{L}(X,Y))$, there exists $U\in\mathcal{L}(\mathcal{C}_{1}(\Omega,X),Y)$ such that $U^{\#} = S$.
\end{cor}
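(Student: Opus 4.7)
The plan is to derive this as the special case $p=1$ of Theorem \ref{c:3equiv-2}. Taking $p=1$ gives conjugate index $p'=\infty$, so condition (c) of that theorem reads $\mathcal{L}(\mathcal{C}(\Omega), X^{*}) = \mathcal{P}_{\infty}(\mathcal{C}(\Omega), X^{*})$ as sets. Since $\mathcal{P}_{\infty} = \mathcal{L}$ (as noted in the sentence preceding the corollary), this equality is automatic, with no assumption on $X$ or $\Omega$ needed.

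Thus condition (c) of Theorem \ref{c:3equiv-2} is satisfied, and by the equivalence (c)$\Leftrightarrow$(a) of that theorem we obtain condition (a), which is exactly the statement of the corollary: for every Banach space $Y$ and every $S \in \mathcal{L}(\mathcal{C}(\Omega), \mathcal{L}(X, Y))$, there exists $U \in \mathcal{L}(\mathcal{C}_{1}(\Omega, X), Y)$ with $U^{\#} = S$.

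There is essentially no obstacle here; the corollary is a one-line consequence of Theorem \ref{c:3equiv-2} once one observes that the Banach operator ideal $\mathcal{P}_{\infty}$ of absolutely $\infty$-summing operators coincides with the ideal $\mathcal{L}$ of all bounded linear operators. Equivalently, one could remark that the norms $d_{1}$ and $\pi$ coincide on $\mathcal{C}(\Omega) \otimes X$ (so condition (d) holds trivially), which immediately yields the conclusion via the canonical identification $\mathcal{L}(\mathcal{C}(\Omega)\hat{\otimes}_{\pi}X, Y) = \mathcal{L}(\mathcal{C}(\Omega), \mathcal{L}(X,Y))$.
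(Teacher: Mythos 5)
Your proposal is correct and is essentially the paper's own argument: the paper derives Corollary \ref{c:C1} immediately from Theorem \ref{c:3equiv-2} by noting that $\mathcal{P}_{\infty}=\mathcal{L}$, so condition (c) holds automatically when $p=1$. Your additional remark that $d_{1}=\pi$ is a valid equivalent way of seeing the same thing via condition (d).
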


Equality (c) of Theorem \ref{c:3equiv-2}, which is well studied in the literature (see, e.g., \cite[Chapter 11]{DJT} for results and references), enables us to move from $\mathcal{C}_{1}(\Omega,X)$ to larger domain spaces $\mathcal{C}_{p}(\Omega,X)$ for special cases of $X$.

\begin{cor} \label{p:cotype2}
Let $X$ and $Y$ be Banach spaces such that $X^{*}$ is of cotype 2. Let $\Omega$ be a compact Hausdorff space. Assume that $S\in\mathcal{L}(\mathcal{C}(\Omega),\mathcal{L}(X,Y))$. Then, for each $p\leq2$, there exists an operator $U\in\mathcal{L}(\mathcal{C}_{p}(\Omega,X), Y)$ such that $U^{\#} = S$.
\end{cor}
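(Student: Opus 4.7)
The plan is to reduce the claim to condition (c) of Theorem \ref{c:3equiv-2}, namely the equality
\[
\mathcal{L}(\mathcal{C}(\Omega), X^{*}) = \mathcal{P}_{p'}(\mathcal{C}(\Omega), X^{*}),
\]
and then invoke the implication (c)$\Rightarrow$(a) of that theorem to extract the desired $U$. Since $S$ is arbitrary in $\mathcal{L}(\mathcal{C}(\Omega),\mathcal{L}(X,Y))$ and $Y$ is arbitrary, this is really all we need.

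The key ingredient is Maurey's extension theorem (see, e.g., \cite[Corollary 11.16 or Theorem 11.14]{DJT}): every bounded linear operator from a $\mathcal{C}(K)$-space into a Banach space of cotype $2$ is absolutely $2$-summing. Applying this to the hypothesis that $X^{*}$ is of cotype $2$, we obtain
\[
\mathcal{L}(\mathcal{C}(\Omega), X^{*}) = \mathcal{P}_{2}(\mathcal{C}(\Omega), X^{*}).
\]
Now for $p\leq 2$ we have $p'\geq 2$, so the standard inclusion $\mathcal{P}_{2}\subset\mathcal{P}_{p'}$ between absolutely summing ideals (see, e.g., \cite[Theorem 2.8]{DJT}) gives
\[
\mathcal{P}_{2}(\mathcal{C}(\Omega), X^{*}) \subset \mathcal{P}_{p'}(\mathcal{C}(\Omega), X^{*}) \subset \mathcal{L}(\mathcal{C}(\Omega), X^{*}),
\]
hence condition (c) of Theorem \ref{c:3equiv-2} holds.

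Therefore, by the equivalence (c)$\Leftrightarrow$(a) of Theorem \ref{c:3equiv-2}, for every Banach space $Y$ and every $S\in\mathcal{L}(\mathcal{C}(\Omega),\mathcal{L}(X,Y))$ there exists $U\in\mathcal{L}(\mathcal{C}_{p}(\Omega,X),Y)$ such that $U^{\#}=S$, which is the statement of the corollary. There is no genuine obstacle here once Theorem \ref{c:3equiv-2} is in hand; the only nontrivial external input is the Maurey extension theorem, and the entire argument reduces to checking the two inclusions above.
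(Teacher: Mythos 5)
Your proof is correct and follows essentially the same route as the paper: use the fact that every operator from a $\mathcal{C}(K)$-space into a cotype-$2$ space is absolutely $2$-summing (\cite[Theorem 11.14]{DJT}), combine with the inclusion $\mathcal{P}_{2}\subset\mathcal{P}_{p'}$ for $p'\geq 2$, and then apply the equivalence (c)$\Leftrightarrow$(a) of Theorem \ref{c:3equiv-2}. The only cosmetic point is that the cited result is more naturally described as the Lindenstrauss--Pe{\l}czy\'{n}ski/Maurey theorem on $2$-summing operators from $\mathcal{C}(K)$-spaces rather than ``Maurey's extension theorem,'' but the substance is exactly what the paper uses.
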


\begin{proof}
Let $X^{*}$ be of cotype 2. Then $\mathcal{L}(\mathcal{C}(\Omega), X^{*}) = \mathcal{P}_{2}(\mathcal{C}(\Omega), X^{*})$ (see, e.g., \cite[Theorem 11.14]{DJT}). Since $p'\geq2$, we have $\mathcal{P}_{2}(\mathcal{C}(\Omega), X^{*})\subset \mathcal{P}_{p'}(\mathcal{C}(\Omega), X^{*})$ (see, e.g., \cite[Theorem 2.8]{DJT}). Hence, $\mathcal{L}(\mathcal{C}(\Omega), X^{*}) = \mathcal{P}_{p'}(\mathcal{C}(\Omega), X^{*})$ and we only need to apply Theorem \ref{c:3equiv-2} to finish the proof.
\end{proof}

In Section \ref{s4}, we shall show that Corollary \ref{p:cotype2} does not hold for $p>2$ (see Example \ref{ex:cotype2NO}).

\begin{cor} \label{p:cotypeq}
Let $X$ and $Y$ be Banach spaces such that $X^{*}$ is of cotype $q$, where $2\leq q <\infty$. Let $\Omega$ be a compact Hausdorff space. Assume that $S\in\mathcal{L}(\mathcal{C}(\Omega),\mathcal{L}(X,Y))$. Then, for each $p<q'$, there exists an operator $U\in\mathcal{L}(\mathcal{C}_{p}(\Omega,X),Y)$ such that $U^{\#} = S$.
\end{cor}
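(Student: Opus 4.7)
The plan is to reduce the claim to condition~(c) of Theorem~\ref{c:3equiv-2}, mirroring the proof of Corollary~\ref{p:cotype2}. Concretely, it suffices to verify the set equality
\[
\mathcal{L}(\mathcal{C}(\Omega), X^{*}) = \mathcal{P}_{p'}(\mathcal{C}(\Omega), X^{*}),
\]
since the implication (c)$\Rightarrow$(a) of Theorem~\ref{c:3equiv-2} then immediately produces the required $U\in\mathcal{L}(\mathcal{C}_{p}(\Omega,X), Y)$ with $U^{\#}=S$.

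To obtain this equality, I would invoke the cotype-$q$ strengthening of Maurey's extension theorem: if $X^{*}$ has cotype $q\in[2,\infty)$, then every operator from $\mathcal{C}(\Omega)$ into $X^{*}$ is absolutely $r$-summing for each $r>q$ (see \cite[Chapter~11]{DJT}; the cotype-$2$ case is Theorem~11.14 there, already used in Corollary~\ref{p:cotype2}). Translating the hypothesis $p<q'$ into $p'>q$, this result applied with $r=p'$ delivers the displayed identity of operator ideals.

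The only step requiring care is identifying the appropriate cotype-$q$ version of Maurey's theorem. In the cotype-$2$ case one has the stronger conclusion that such $T$ is even $2$-summing, so the inclusion $\mathcal{P}_{2}\subset\mathcal{P}_{p'}$ valid for all $p'\ge 2$ yielded the range $p\le 2$; the cotype-$q$ strengthening for $q>2$, however, only provides $r$-summing with strict inequality $r>q$ (cotype-$q$ alone gives merely $(q,1)$-summing, which is weaker than $q$-summing in general). This is precisely what forces the strict inequality $p<q'$ in the conclusion, and no other obstacle is anticipated.
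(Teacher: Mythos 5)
Your proposal is correct and follows essentially the same route as the paper: verify condition (c) of Theorem \ref{c:3equiv-2} by invoking the cotype-$q$ form of Maurey's theorem (\cite[Theorem 11.14]{DJT}), which gives $\mathcal{L}(\mathcal{C}(\Omega),X^{*})=\mathcal{P}_{r}(\mathcal{C}(\Omega),X^{*})$ for all $r>q$, and apply it with $r=p'>q$. Your closing remark correctly identifies why the strict inequality $p<q'$ is forced when $q>2$.
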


\begin{proof}
Let $X^{*}$ be of cotype $q$, $2\leq q<\infty$. Then $\mathcal{L}(\mathcal{C}(\Omega), X^{*}) = \mathcal{P}_{r}(\mathcal{C}(\Omega), X^{*})$, for all $r>q$ (see, e.g., \cite[Theorem 11.14]{DJT}). Since $p'>q$, $\mathcal{L}(\mathcal{C}(\Omega), X^{*}) = \mathcal{P}_{p'}(\mathcal{C}(\Omega), X^{*})$ and the proof finishes using again Theorem \ref{c:3equiv-2}.
\end{proof}

\section{Characterizing associated operators: ``local'' case}\label{s2}

In this section, let an operator $S\in\mathcal{L}(\mathcal{C}(\Omega),\mathcal{L}(X,Y))$ be given, where $X$ and $Y$ are Banach spaces. Let $1\leq p\leq\infty$. We are interested in equivalent conditions for the existence of an operator $U\in\mathcal{L}(\mathcal{C}_{p}(\Omega,X),Y)$ such that $U^{\#} = S$. These conditions will be presented in Corollary \ref{c:4equiv} below. They immediately follow from the more general case when $S\in\mathcal{L}(Z,\mathcal{L}(X,Y))$ and $U\in\mathcal{L}(Z\hat{\otimes}_{d_{p}}X,Y)$, where $Z$ is a Banach space, see Theorem \ref{t:3equiv}. Theorem \ref{t:3equiv} in turn will be deduced from Theorem \ref{t:3Eve}, which characterizes operators that take $Z$ to the space $\mathcal{P}_{(r,q)}(X,Y)$ of absolutely $(r,q)$-summing operators, and is the main result of this section.

Let $1\leq q\leq r\leq\infty$. Recall that an operator $U\in\mathcal{L}(X,Y)$ is \emph{absolutely $(r,q)$-summing} if there is a constant $C\geq 0$ such that
\[
\|(Ux_{i})_{i=1}^{n}\|_{r} \leq C\, \|(x_{i})_{i=1}^{n}\|_{q}^{w}
\]
for all finite systems $(x_{i})_{i=1}^{n}\subset X$. The least constant $C$ for which the previous equality holds is denoted by $\|U\|_{\mathcal{P}_{(r,q)}}$. All absolutely $(r,q)$-summing operators between arbitrary Banach spaces form a Banach operator ideal, denoted by $\mathcal{P}_{(r,q)}$. Recall also that the Banach operator ideal $\mathcal{P}_{q}$ of absolutely $q$-summing operators is defined as $\mathcal{P}_{q} = \mathcal{P}_{(q,q)}$.

\begin{thm} \label{t:3Eve}
Let $X$, $Y$, and $Z$ be Banach spaces. Let $1\leq q\leq r\leq \infty$. Assume that $T\in\mathcal{L}(Z,\mathcal{L}(X,Y))$. The following statements are equivalent.

\emph{(a)} $T\in\mathcal{L}(Z,\mathcal{P}_{(r,q)}(X,Y))$.

\emph{(b)} There exists a constant $c>0$ such that, for all finite systems $(y_{i}^{*})_{i=1}^{n}\subset Y^{*}$ and $(x_{i})_{i=1}^{n}\subset X$,
\[
\|(T^{*}(x_{i}\otimes y_{i}^{*}))\|_{r}^{w} \leq c\,\|(y_{i}^{*})\|_{\infty}\|(x_{i})\|_{q}^{w}.
\]

\emph{(b$'$)} There exists a constant $c>0$ such that, for all $(y_{i}^{*})\in \ell_{\infty}(Y^{*})$ and $(x_{i})\in \ell_{q}^{w}(X)$, and for all $n\in\mathds{N}$,
\[
\|(T^{*}(x_{i}\otimes y_{i}^{*}))_{i=n}^{\infty}\|_{r}^{w} \leq c\,\|(y_{i}^{*})_{i=n}^{\infty}\|_{\infty}\|(x_{i})_{i=n}^{\infty}\|_{q}^{w}.
\]

\emph{(c)} If $(y_{i}^{*})\in \ell_{\infty}(Y^{*})$ and $(x_{i})\in \ell_{q}^{w}(X)$, then $(T^{*}(x_{i}\otimes y_{i}^{*}))\in\ell_{r}^{w}(Z^{*})$.

\emph{(d)} If $(y_{i}^{*})\in c_{0}(Y^{*})$ and $(x_{i})\in \ell_{q}^{w}(X)$ $($or $(y_{i}^{*})\in \ell_{\infty}(Y^{*})$ and $(x_{i})\in \ell_{q}^{u}(X)$$)$, then $(T^{*}(x_{i}\otimes y_{i}^{*}))\in\ell_{r}^{u}(Z^{*})$.
\end{thm}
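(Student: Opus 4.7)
The plan is to reduce each equivalence to the identity
\[
\|(T^*(x_i\otimes y_i^*))_i\|_r^w=\sup_{\|z\|\leq 1}\bigl(\textstyle\sum_i|y_i^*((Tz)x_i)|^r\bigr)^{1/r}
\]
(with the obvious modification for $r=\infty$), which is immediate from $T^*(x\otimes y^*)(z)=y^*((Tz)x)$; the four conditions are then reformulations of a single quantitative estimate over different classes of sequences. For (a)$\Leftrightarrow$(b), I would first note that by the closed graph theorem $T:Z\to\mathcal{P}_{(r,q)}(X,Y)$ is automatically bounded as soon as it is well defined as a set map. Assuming (a), the elementary pointwise estimate $|y_i^*((Tz)x_i)|\leq\|(y_i^*)\|_\infty\,\|(Tz)x_i\|$ followed by summation and the inequality $\|T(z)\|_{\mathcal{P}_{(r,q)}}\leq\|T\|\,\|z\|$ gives (b) with $c=\|T\|$. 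Conversely, given (b), fix $z$ with $\|z\|=1$ and a finite system $(x_i)$, and choose Hahn-Banach norming functionals $y_i^*$ with $\|y_i^*\|=1$ and $y_i^*((Tz)x_i)=\|(Tz)x_i\|$; the displayed identity then yields $\|((Tz)x_i)\|_r=(\sum|y_i^*((Tz)x_i)|^r)^{1/r}\leq\|(T^*(x_i\otimes y_i^*))\|_r^w\leq c\,\|(x_i)\|_q^w$.

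For the block (b)$\Leftrightarrow$(b$'$)$\Leftrightarrow$(c), the implication (b$'$)$\Rightarrow$(b) is trivial by zero-padding, while (b)$\Rightarrow$(b$'$) comes from the monotone identity $\|(\cdot)_{i=n}^\infty\|_r^w=\sup_N\|(\cdot)_{i=n}^N\|_r^w$. The implication (b$'$)$\Rightarrow$(c) follows by specialising to $n=1$. The only genuinely quantitative step is (c)$\Rightarrow$(b$'$), which I would prove by a two-stage closed graph / Banach-Steinhaus argument: for each fixed $(y_i^*)\in\ell_\infty(Y^*)$ the linear map $(x_i)\mapsto(T^*(x_i\otimes y_i^*))$ from $\ell_q^w(X)$ to $\ell_r^w(Z^*)$ has closed graph (coordinatewise norm convergence in both sequence spaces suffices to check closure, since $x\mapsto T^*(x\otimes y_i^*)$ is bounded $X\to Z^*$), so it is bounded; symmetrically, $(y_i^*)\mapsto(T^*(x_i\otimes y_i^*))$ is bounded $\ell_\infty(Y^*)\to\ell_r^w(Z^*)$ for each fixed $(x_i)$. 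Uniform boundedness over $\|(y_i^*)\|_\infty\leq 1$ then yields the joint constant $c$.

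Finally, for (c)$\Leftrightarrow$(d), the direction (c)$\Rightarrow$(d) is an application of (b$'$) to tails: either $(y_i^*)\in c_0(Y^*)$ forces $\|(y_i^*)_{i=n}^\infty\|_\infty\to 0$ or $(x_i)\in\ell_q^u(X)$ forces $\|(x_i)_{i=n}^\infty\|_q^w\to 0$, each driving the $\ell_r^w$-tail of $(T^*(x_i\otimes y_i^*))$ to zero and placing the sequence in $\ell_r^u(Z^*)$. For (d)$\Rightarrow$(c), I would extract a joint constant $c'$ from (d) by the same closed graph / Banach-Steinhaus argument as above, now restricted to $c_0(Y^*)\times\ell_q^w(X)$, and then, for general $(y_i^*)\in\ell_\infty(Y^*)$ and $(x_i)\in\ell_q^w(X)$, apply this bound to the truncations $(y_1^*,\ldots,y_n^*,0,\ldots)\in c_0(Y^*)$; the identity $\|(z_i^*)\|_r^w=\sup_n\|(z_i^*)_{i=1}^n\|_r^w$ converts uniform boundedness of the truncations into membership of the full sequence in $\ell_r^w(Z^*)$. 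The main obstacle, in my view, is precisely this extraction of the quantitative constants of (b), (b$'$) from the qualitative membership statements (c), (d): the bilinearity of $(y_i^*,x_i)\mapsto(T^*(x_i\otimes y_i^*))$ forces one to iterate closed graph with uniform boundedness rather than apply either principle in one shot.
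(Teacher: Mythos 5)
Your proposal is correct and follows essentially the same route as the paper: the equivalence (a)$\Leftrightarrow$(b) rests on the identity $\|(T^{*}(x_{i}\otimes y_{i}^{*}))\|_{r}^{w}=\sup_{z\in B_{Z}}\bigl(\sum_{i}|y_{i}^{*}((Tz)x_{i})|^{r}\bigr)^{1/r}$ together with Hahn--Banach norming functionals, while the block (b), (b$'$), (c), (d) is obtained from a closed graph argument for the bilinear map $(x,y^{*})\mapsto T^{*}(x\otimes y^{*})$ combined with tail estimates. The only cosmetic difference is that the paper isolates that block into a separate proposition on arbitrary continuous bilinear maps and cites the closed graph theorem for bilinear maps as a black box, whereas you unpack it into the standard two-stage closed graph plus Banach--Steinhaus argument.
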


\begin{proof}
(a)$\Leftrightarrow$(b). Condition (a) is equivalent to the existence of a constant $c>0$ such that
\[
\|Tz\|_{\mathcal{P}_{(r,q)}} \leq c\,\|z\|
\]
for all $z\in Z$. This means that, for all $z\in B_{Z}$ and finite systems $(x_{i})_{i=1}^{n}\subset X$,
\[
\|((Tz)x_{i})\|_{r} \leq c\,\|(x_{i})\|_{q}^{w}.
\]
Consider $y_{i}^{*}\in B_{Y^{*}}$ such that
\[
\|(Tz)x_{i}\|_{Y} = \sup_{y_{i}^{*}\in B_{Y^{*}}} | y_{i}^{*}((Tz)x_{i}) |
\]
for $i=1,\ldots,n$. We may write
\[
\|((Tz)x_{i})\|_{r}^{r} = \sum_{i=1}^{n} \sup_{y_{i}^{*}\in B_{Y^{*}}} | y_{i}^{*}((Tz)x_{i}) |^{r} = \sup_{\|(y_{i}^{*})_{i=1}^{n}\|_{\infty}\leq1} \sum_{i=1}^{n} | y_{i}^{*}((Tz)x_{i}) |^{r}.
\]
Hence, (a) is equivalent to the existence of $c>0$ such that, for all $(x_{i})_{i=1}^{n}\subset X$,
\[
\sup_{\|(y_{i}^{*})_{i=1}^{n}\|_{\infty}\leq1} \sup_{z\in B_{Z}} \Big( \sum_{i=1}^{n} | y_{i}^{*}((Tz)x_{i}) |^{r}   \Big)^{1/r} \leq c\,\|(x_{i})\|_{q}^{w}.
\]
Since
\[
\sup_{z\in B_{Z}} \Big( \sum_{i=1}^{n} | y_{i}^{*}((Tz)x_{i}) |^{r}   \Big)^{1/r} = \|(T^{*}(y_{i}^{*}\otimes x_{i}))\|_{r}^{w},
\]
the above inequality reads as
\[
\sup_{\|(y_{i}^{*})_{i=1}^{n}\|_{\infty}\leq1} \|(T^{*}(y_{i}^{*}\otimes x_{i}))\|_{r}^{w} \leq c\,\|(x_{i})\|_{q}^{w}.
\]
Therefore, (a) is clearly equivalent to (b).

For the equivalences of the remaining conditions, see Proposition \ref{p:t3Eve} below, where it is shown that conditions (b), (b$'$), (c), and (d) are equivalent in a more general context with $T^{*}$ replaced by an arbitrary continuous bilinear map.
\end{proof}

Recall that a separately continuous bilinear map is continuous (see, e.g., \cite[Theorem 1.2, p. 8]{DF}) and this is equivalent to be bounded (see, e.g., \cite[Proposition 1.1, p. 8]{DF}).

\begin{prop} \label{p:t3Eve}
Let $X$, $Y$, and $Z$ be Banach spaces. Let $1\leq q\leq r\leq \infty$. Assume that $A:X\times Y\rightarrow Z$ is a continuous bilinear map. The following statements are equivalent.

\emph{(i)} There exists a constant $c>0$ such that, for all finite systems $(x_{i})_{i=1}^{n}\subset X$ and $(y_{i})_{i=1}^{n}\subset Y$,
\[
\|(A(x_{i}, y_{i}))\|_{r}^{w} \leq c\,\|(y_{i})\|_{\infty}\|(x_{i})\|_{q}^{w}.
\]

\emph{(i$'$)} There exists a constant $c>0$ such that, for all $(x_{i})\in \ell_{q}^{w}(X)$ and $(y_{i})\in \ell_{\infty}(Y)$, and for all $n\in\mathds{N}$,
\[
\|(A(x_{i}, y_{i}))_{i=n}^{\infty}\|_{r}^{w} \leq c\,\|(y_{i})_{i=n}^{\infty}\|_{\infty}\|(x_{i})_{i=n}^{\infty}\|_{q}^{w}.
\]

\emph{(ii)} If $(x_{i})\in \ell_{q}^{w}(X)$ and $(y_{i})\in \ell_{\infty}(Y)$, then $(A(x_{i}, y_{i}))\in\ell_{r}^{w}(Z)$.

\emph{(iii)} If $(x_{i})\in \ell_{q}^{w}(X)$ and $(y_{i})\in c_{0}(Y)$ $($or $(x_{i})\in \ell_{q}^{u}(X)$ and $(y_{i})\in \ell_{\infty}(Y)$$)$, then $(A(x_{i}, y_{i}))\in\ell_{r}^{u}(Z)$.
\end{prop}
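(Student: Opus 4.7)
The plan is to traverse the four conditions through the chain $(\mathrm{i}) \Leftrightarrow (\mathrm{i}') \Rightarrow (\mathrm{ii}) \Rightarrow (\mathrm{i}')$ and $(\mathrm{i}') \Rightarrow (\mathrm{iii}) \Rightarrow (\mathrm{i})$. The equivalence $(\mathrm{i}) \Leftrightarrow (\mathrm{i}')$ is a routine truncation/extension argument: the forward direction applies $(\mathrm{i})$ to the finite blocks $(x_i)_{i=n}^N$ and $(y_i)_{i=n}^N$ and passes to the supremum over $N$, using the identity $\|(z_i)_{i=n}^\infty\|_r^w = \sup_N \|(z_i)_{i=n}^N\|_r^w$; the converse pads a finite system with zeros and specialises to $n=1$. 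The implication $(\mathrm{i}') \Rightarrow (\mathrm{ii})$ is immediate (take $n=1$ and read off membership in $\ell_r^w(Z)$).

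For $(\mathrm{ii}) \Rightarrow (\mathrm{i}')$ I would consider the bilinear map
\[
\Phi \colon \ell_q^w(X) \times \ell_\infty(Y) \to \ell_r^w(Z), \qquad \Phi\bigl((x_i),(y_i)\bigr) = \bigl(A(x_i,y_i)\bigr),
\]
which is well defined by $(\mathrm{ii})$. In each of the three sequence spaces the norm dominates coordinatewise evaluation (for $\ell_q^w$ and $\ell_r^w$ one has $\|u_j\| = \sup_{u^*} |u^*(u_j)| \le \|(u_i)\|_q^w$, and for $\ell_\infty$ this is immediate), so the separate continuity of $A$ transfers to the closedness of the graph of each partial map obtained from $\Phi$. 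By the closed graph theorem, $\Phi$ is separately continuous, hence jointly bounded thanks to the fact recalled just before the proposition. Applying the resulting estimate to the shifted sequences $(x_i)_{i=n}^\infty$ and $(y_i)_{i=n}^\infty$ yields exactly $(\mathrm{i}')$.

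The implication $(\mathrm{iii}) \Rightarrow (\mathrm{i})$ follows by the same mechanism: any finite system $(x_i)_{i=1}^n$, extended by zeros, lies in $\ell_q^u(X)$, so $(\mathrm{iii})$ guarantees that the bilinear map $\ell_q^u(X) \times \ell_\infty(Y) \to \ell_r^u(Z)$, $((x_i),(y_i)) \mapsto (A(x_i,y_i))$, is well defined; closed graph plus separate-to-joint continuity produces a norm bound which, restricted to eventually-zero sequences, reads as the inequality in $(\mathrm{i})$ (using that the norm of $\ell_q^u(X)$ is inherited from $\ell_q^w(X)$). Finally, $(\mathrm{i}') \Rightarrow (\mathrm{iii})$ is a tail estimate based on the standard description of $\ell_r^u(Z)$ as those $(z_i)\in\ell_r^w(Z)$ with $\|(z_i)_{i=n}^\infty\|_r^w \to 0$: if $(y_i) \in c_0(Y)$ then $\|(y_i)_{i=n}^\infty\|_\infty \to 0$ while $\|(x_i)_{i=n}^\infty\|_q^w$ remains bounded, and symmetrically if $(x_i) \in \ell_q^u(X)$ then $\|(x_i)_{i=n}^\infty\|_q^w \to 0$ while $\|(y_i)_{i=n}^\infty\|_\infty$ is bounded; in either case $(\mathrm{i}')$ forces $\|(A(x_i,y_i))_{i=n}^\infty\|_r^w \to 0$.

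The main technical obstacle is the verification that $\Phi$ has closed graph in each argument; this rests on the observation that norm convergence in $\ell_q^w$, $\ell_\infty$, and $\ell_r^w$ dominates coordinatewise convergence in the underlying Banach space, so that the componentwise continuity of $A$ transfers to the sequence level. Once this is in place, the fact recalled in the paragraph preceding the proposition (separate $\Rightarrow$ joint continuity for bilinear maps between Banach spaces) closes both non-trivial implications uniformly.
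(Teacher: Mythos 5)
Your proposal is correct and follows essentially the same route as the paper: truncation for (i)$\Leftrightarrow$(i$'$), a closed-graph argument for (ii)$\Rightarrow$(i) and (iii)$\Rightarrow$(i) resting on the fact that the norms of $\ell_{q}^{w}$, $\ell_{\infty}$, and $\ell_{r}^{w}$ dominate coordinatewise convergence, and tail estimates for (i$'$)$\Rightarrow$(iii) via the characterization of $\ell_{r}^{u}$ by vanishing tails. The only cosmetic difference is that you obtain boundedness by applying the linear closed graph theorem to each partial map and then invoking separate-to-joint continuity, whereas the paper verifies closedness of the graph of the bilinear map directly and cites a closed graph theorem for bilinear maps; both are equally valid.
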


\begin{proof}
(i)$\Rightarrow$(i$'$). Let $(y_{i})\in \ell_{\infty}(Y)$ and $(x_{i})\in \ell_{q}^{w}(X)$. For all $n\in\mathds{N}$ and all $k\in\mathds{N}$ with $k>n$, we have
\[
\|(A(x_{i}, y_{i}))_{i=n}^{k}\|_{r}^{w} \leq c\,\|(y_{i})_{i=n}^{k}\|_{\infty}\|(x_{i})_{i=n}^{k}\|_{q}^{w}
\]
\[
\leq c\,\|(y_{i})_{i=n}^{\infty}\|_{\infty}\|(x_{i})_{i=n}^{\infty}\|_{q}^{w}.
\]
As this inequality holds for all $k\in\mathds{N}$ with $k>n$, it is clear from the definition of the norm $\|\cdot\|_{r}^{w}$ that
\[
\|(A(x_{i}, y_{i}))_{i=n}^{\infty}\|_{r}^{w} \leq c\,\|(y_{i})_{i=n}^{\infty}\|_{\infty}\|(x_{i})_{i=n}^{\infty}\|_{q}^{w}.
\]

(i$'$)$\Rightarrow$(i). This is trivial.

(ii)$\Rightarrow$(i). Consider the bilinear map
\[
B\,:\,\ell_{q}^{w}(X)\times\ell_{\infty}(Y) \rightarrow \ell_{r}^{w}(Z),
\]
\[
(x_{i},y_{i})_{i}:=((x_{i}),(y_{i})) \mapsto (A(x_{i}, y_{i})).
\]
We shall prove that the graph of $B$ is closed. Let $u_{k} = (x_{i}^{k},y_{i}^{k})_{i}$ be a sequence in $\ell_{q}^{w}(X)\times\ell_{\infty}(Y)$ such that $u_{k}\rightarrow_{k} u=(x_{i},y_{i})_{i}$. Looking at the norms of $\ell_{q}^{w}(X)$ and $\ell_{\infty}(Y)$, we get that $x_{i}^{k}\rightarrow_{k}x_{i}$ and $y_{i}^{k}\rightarrow_{k}y_{i}$ for all $i\in\mathds{N}$. Let $v=(z_{i})\in\ell_{r}^{w}(Z)$ be such that $Bu_{k}\rightarrow_{k} v$, i.e., $(A(x_{i}^{k},y_{i}^{k}))\rightarrow_{k} (z_{i})$. As before, looking at the norm of $\ell_{r}^{w}(Z)$, we get that $A(x_{i}^{k},y_{i}^{k})\rightarrow_{k} z_{i}$ for all $i\in\mathds{N}$. On the other hand, by the continuity of $A$, $A(x_{i}^{k},y_{i}^{k})\rightarrow_{k}A(x_{i},y_{i})$ for all $i\in\mathds{N}$. Hence, $z_{i}=A(x_{i},y_{i})$ for all $i\in\mathds{N}$, i.e., $v=Bu$.

So $B$ has a closed graph. By the closed graph theorem for bilinear maps (see, e.g., \cite[Exercise 1.11, p. 14]{DF} or \cite{F}), $B$ is continuous, hence bounded, and therefore (i) clearly holds.

(iii)$\Rightarrow$(i). We can use the same argument as in (ii)$\Rightarrow$(i).

(i$'$)$\Rightarrow$(ii). Let $(y_{i})\in \ell_{\infty}(Y)$ and $(x_{i})\in \ell_{q}^{w}(X)$. Then $(A(x_{i},y_{i}))\in\ell_{r}^{w}(Z)$, because
\[
\|(A(x_{i}, y_{i}))_{i=1}^{\infty}\|_{r}^{w} \leq c\,\|(y_{i})_{i=1}^{\infty}\|_{\infty}\|(x_{i})_{i=1}^{\infty}\|_{q}^{w} < \infty.
\]

(i$'$)$\Rightarrow$(iii). Let $(y_{i})\in c_{0}(Y)$ and $(x_{i})\in \ell_{q}^{w}(X)$. Then $\|(y_{i})_{i=n}^{\infty}\|_{\infty}\rightarrow 0$ when $n$ tends to infinity and $\|(x_{i})_{i=n}^{\infty}\|_{q}^{w} \leq \|(x_{i})_{i=1}^{\infty}\|_{q}^{w}$. Hence,
\[
\|(A(x_{i}, y_{i}))_{i=n}^{\infty}\|_{r}^{w} \leq c\,\|(y_{i})_{i=n}^{\infty}\|_{\infty}\|(x_{i})_{i=1}^{\infty}\|_{q}^{w} \xrightarrow[n]{} 0,
\]
showing that $(A(x_{i},y_{i}))\in \ell_{r}^{u}(Z)$.

Let now $(y_{i})\in \ell_{\infty}(Y)$ and $(x_{i})\in \ell_{q}^{u}(X)$. Then $\|(y_{i})_{i=n}^{\infty}\|_{\infty} \leq \|(y_{i})_{i=1}^{\infty}\|_{\infty}$ and $\|(x_{i})_{i=n}^{\infty}\|_{q}^{w}\rightarrow 0$ when $n$ tends to infinity. Hence,
\[
\|(A(x_{i},y_{i}))_{i=n}^{\infty}\|_{r}^{w} \leq c\,\|(y_{i})_{i=1}^{\infty}\|_{\infty}\|(x_{i})_{i=n}^{\infty}\|_{q}^{w} \xrightarrow[n]{} 0,
\]
showing that $(A(x_{i},y_{i}))\in \ell_{r}^{u}(Z)$.
\end{proof}

Let $\alpha$ be a tensor norm and let $\mathcal{A}$ be the dual space operator ideal of $\alpha$. As was said in Section \ref{s3}, given an operator $S\in\mathcal{L}(Z,\mathcal{L}(X,Y))$, there always exists an operator $U\in\mathcal{L}(Z\hat{\otimes}_{\pi}X,Y)$ such that $U^{\#}=S$. Being interested in the case when $U\in\mathcal{L}(Z\hat{\otimes}_{\alpha}X,Y)$, we are going to use that
\[
U\in\mathcal{L}(Z\hat{\otimes}_{\alpha}X,Y) \,\,\Leftrightarrow\,\,U^{*}\in\mathcal{L}(Y^{*},\mathcal{A}(Z,X^{*})),
\]
which is straightforward to verify. We shall now apply Theorem \ref{t:3Eve} and the above observation to the special case $\alpha=d_{p}$. Then, as was recalled above, $\mathcal{A}=\mathcal{P}_{p'}=\mathcal{P}_{(p',p')}$.

\begin{thm} \label{t:3equiv}
Let $X$, $Y$, and $Z$ be Banach spaces. Let $1\leq p\leq \infty$. Assume that $S\in\mathcal{L}(Z,\mathcal{L}(X, Y))$. The following statements are equivalent.

$($\emph{a}$)$ There exists an operator $U\in\mathcal{L}(Z\hat{\otimes}_{d_{p}}X,Y)$ such that $U^{\#} = S$.

$($\emph{b}$)$ There exists a constant $c>0$ such that, for all finite systems $(x_{i})_{i=1}^{n}\subset X$ and $(z_{i})_{i=1}^{n}\subset Z$,
\[
\|((Sz_{i})x_{i})\|_{p'}^{w} \leq c\,\|(x_{i})\|_{\infty}\|(z_{i})\|_{p'}^{w}.
\]

$($\emph{b$'$}$)$ There exists a constant $c>0$ such that, for all $(x_{i})\in \ell_{\infty}(X)$ and $(z_{i})\in \ell_{p'}^{w}(Z)$, and for all $n\in\mathds{N}$,
\[
\|((Sz_{i})x_{i})_{i=n}^{\infty}\|_{p'}^{w} \leq c\,\|(x_{i})_{i=n}^{\infty}\|_{\infty}\|(z_{i})_{i=n}^{\infty}\|_{p'}^{w}.
\]

$($\emph{c}$)$ If $(x_{i})\in \ell_{\infty}(X)$ and $(z_{i})\in \ell_{p'}^{w}(Z)$, then $((Sz_{i})x_{i})\in\ell_{p'}^{w}(Y)$.

$($\emph{d}$)$ If $(x_{i})\in c_{0}(X)$ and $(z_{i})\in \ell_{p'}^{w}(Z)$ $($or $(x_{i})\in \ell_{\infty}(X)$ and $(z_{i})\in \ell_{p'}^{u}(Z)$$)$, then $((Sz_{i})x_{i})\in\ell_{p'}^{u}(Y)$.
\end{thm}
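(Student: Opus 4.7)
The plan is to split the circle of implications into two independent pieces. First, I would establish (a) $\Leftrightarrow$ (b) directly using the duality $U \in \mathcal{L}(Z \hat{\otimes}_{d_{p}} X, Y) \Leftrightarrow U^{*} \in \mathcal{L}(Y^{*}, \mathcal{P}_{p'}(Z, X^{*}))$ recalled just above the statement. Then I would obtain (b) $\Leftrightarrow$ (b$'$) $\Leftrightarrow$ (c) $\Leftrightarrow$ (d) in one stroke by applying Proposition \ref{p:t3Eve} to a concrete bilinear map built from $S$.

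For (a) $\Leftrightarrow$ (b): the identity $U^{\#} = S$ forces $U^{*} y^{*}$ to act on $z$ as $(Sz)^{*} y^{*}$, so that (a) is equivalent to the existence of a constant $c > 0$ such that
\[
\Big( \sum_{i=1}^{n} \| (Sz_{i})^{*} y^{*} \|^{p'} \Big)^{1/p'} \leq c \, \|y^{*}\| \, \|(z_{i})\|_{p'}^{w}
\]
for all finite $(z_{i})_{i=1}^{n} \subset Z$ and all $y^{*} \in Y^{*}$, that is, a uniform $\mathcal{P}_{p'}$-summing bound on $U^{*} y^{*}$. Substituting
\[
\| (Sz_{i})^{*} y^{*} \| = \sup_{\|x_{i}\| \leq 1} |\langle y^{*}, (Sz_{i}) x_{i} \rangle|
\]
into the sum, then pulling the (independent) suprema through the finite sum, taking $\sup_{\|y^{*}\| \leq 1}$ and invoking the definition of $\|\cdot\|_{p'}^{w}$, the above condition transforms into
\[
\sup_{\|x_{i}\| \leq 1,\, i = 1,\ldots,n} \, \| ((Sz_{i}) x_{i})_{i=1}^{n} \|_{p'}^{w} \leq c \, \|(z_{i})\|_{p'}^{w},
\]
which is (b) by homogeneity in $(x_{i})$.

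For (b) $\Leftrightarrow$ (b$'$) $\Leftrightarrow$ (c) $\Leftrightarrow$ (d): since $S \in \mathcal{L}(Z, \mathcal{L}(X, Y))$, the bilinear map $A \colon Z \times X \to Y$ defined by $A(z, x) = (Sz) x$ is continuous with $\|A\| \leq \|S\|$. I would apply Proposition \ref{p:t3Eve} to $A$, substituting (in the Proposition's notation) $X \leftarrow Z$, $Y \leftarrow X$, $Z \leftarrow Y$, and taking $q = r = p'$. With this dictionary, the Proposition's conditions (i), (i$'$), (ii), (iii) read verbatim as our (b), (b$'$), (c), (d).

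The main obstacle is the supremum/sum juggling inside the (a) $\Leftrightarrow$ (b) step. One must justify carefully that the sup over $\|x_{i}\| \leq 1$ can be brought outside the finite sum $\sum_{i} |\cdots|^{p'}$ (valid because the $x_{i}$'s are independent free variables) and that this sup commutes with the sup over $\|y^{*}\| \leq 1$. An alternative route would be to apply Theorem \ref{t:3Eve} directly to $T := U^{*}$, but that would produce conditions involving elements of $X^{**}$, which would then have to be reduced to our $X$-conditions via the principle of local reflexivity; the route via Proposition \ref{p:t3Eve} avoids this detour. Once the first equivalence is handled, the rest is a mechanical identification with a proposition already established in full generality.
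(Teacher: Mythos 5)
Your proposal is correct, and its overall skeleton (the duality $U\in\mathcal{L}(Z\hat{\otimes}_{d_{p}}X,Y)\Leftrightarrow U^{*}\in\mathcal{L}(Y^{*},\mathcal{P}_{p'}(Z,X^{*}))$ plus Proposition \ref{p:t3Eve} for the equivalences among (b), (b$'$), (c), (d)) matches the paper's. The genuine difference is in how (a)$\Leftrightarrow$(b) is established. The paper first notes that (a) amounts to $U^{*}\in\mathcal{L}(Y^{*},\mathcal{P}_{p'}(Z,X^{*}))$ and then feeds $T:=U^{*}$ into Theorem \ref{t:3Eve}; since the target of $U^{*}$ is $\mathcal{P}_{p'}(Z,X^{*})$, the resulting test condition involves finite systems $(x_{i}^{**})\subset X^{**}$ and the bitranspose $U^{**}$, and the paper must then invoke the principle of local reflexivity (with an explicit $\|V\|\leq 2$ operator) to pull the condition back down to $X$. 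You instead unwind the duality by hand: $(U^{*}y^{*})z=(Sz)^{*}y^{*}$, so (a) is a uniform $\mathcal{P}_{p'}$-bound on the functionals $y^{*}\circ U$, and the identity $\sum_{i}\sup_{\|x_{i}\|\leq1}|y^{*}((Sz_{i})x_{i})|^{p'}=\sup_{(x_{i})\subset B_{X}}\sum_{i}|y^{*}((Sz_{i})x_{i})|^{p'}$ (legitimate, since the $x_{i}$ are independent variables and suprema commute) turns this directly into (b). Your route is more elementary and avoids the bidual and local reflexivity altogether; what the paper's route buys is the reuse of Theorem \ref{t:3Eve}, which is its stated main result of the section and is needed independently, at the price of the $X^{**}$ detour. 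Two small points to make explicit if you write this up: the reduction of (a) to the uniform bound on $\|U^{*}y^{*}\|_{\mathcal{P}_{p'}}$ uses that the identification $(Z\hat{\otimes}_{d_{p}}X)^{*}=\mathcal{P}_{p'}(Z,X^{*})$ is isometric, not just algebraic; and for $p=1$ (so $p'=\infty$) the sums must be read as suprema, which does not affect the argument.
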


\begin{proof}
Recall that $S$ is associated to some $U\in\mathcal{L}(Z\hat{\otimes}_{\pi}X,Y)$, meaning that $U(z\otimes x) = (Sz)x$ for all $z\in Z$ and $x\in X$.

By the above observation, condition (a) is equivalent to the fact that $U^{*}\in\mathcal{L}(Y^{*},\mathcal{P}_{p'}(Z,X^{*}))$. Therefore, viewing $U^{*}$ in the role of $T$ in Theorem \ref{t:3Eve}, condition (a) is equivalent to the existence of a constant $c>0$ such that, for all finite systems $(x_{i}^{**})_{i=1}^{n}\subset X^{**}$ and $(z_{i})_{i=1}^{n}\subset Z$,
\begin{equation} \label{eq:b}
\|(U^{**}(z_{i}\otimes x_{i}^{**}))\|_{p'}^{w} \leq c\,\|(x_{i}^{**})\|_{\infty}\|(z_{i})\|_{p'}^{w}.
\end{equation}

Now, an easy application of the principle of local reflexivity yields that the above condition is equivalent to the existence of a constant $C>0$ such that, for all finite systems $(x_{i})_{i=1}^{n}\subset X$ and $(z_{i})_{i=1}^{n}\subset Z$,
\begin{equation} \label{eq:b'}
\|(U(z_{i}\otimes x_{i}))\|_{p'}^{w} \leq C\,\|(x_{i})\|_{\infty}\|(z_{i})\|_{p'}^{w},
\end{equation}
which in turn is clearly equivalent to (b).

For completeness, let us include the proof that the conditions concerning \eqref{eq:b} and \eqref{eq:b'} are equivalent. Since $U^{**}$ is an extension of $U$, \eqref{eq:b} implies \eqref{eq:b'}. For the converse, let $(x_{i}^{**})_{i=1}^{n}\subset X^{**}$ and $(z_{i})_{i=1}^{n}\subset Z$. Fix an arbitrary $y^{*}\in B_{Y^{*}}$. Considering $\mbox{span}\{x_{i}^{**}\}_{i=1}^{n}$ in $X^{**}$ and $\mbox{span}\{(U^{*}y^{*})z_{i}\}_{i=1}^{n}$ in $X^{*}$, the principle of local reflexivity yields an operator $V: \mbox{span}\{x_{i}^{**}\}_{i=1}^{n}\rightarrow X$, with $\|V\|\leq 2$, such that $\langle Vx_{i}^{**},(U^{*}y^{*})z_{i}\rangle = \langle (U^{*}y^{*})z_{i},x_{i}^{**}\rangle$ for all $i=1,\ldots,n$. But then $y^{*}(U(z_{i}\otimes Vx_{i}^{**})) = \langle U^{*}y^{*}, z_{i}\otimes Vx_{i}^{**} \rangle = \langle  Vx_{i}^{**}, (U^{*}y^{*})z_{i} \rangle = \langle (U^{*}y^{*})z_{i}, x_{i}^{**} \rangle$ for all $i=1,\ldots,n$. Therefore,
\[
\Big( \sum_{i=1}^{n} |\langle  y^{*},U^{**}(z_{i}\otimes x_{i}^{**})\rangle|^{p'}\Big)^{1/p'} = \Big( \sum_{i=1}^{n} |\langle(U^{*}y^{*})z_{i},x_{i}^{**}\rangle|^{p'}\Big)^{1/p'}
\]
\[
= \Big( \sum_{i=1}^{n} | y^{*}(U(z_{i}\otimes Vx_{i}^{**})) |^{p'}\Big)^{1/p'} \leq \|(U(z_{i}\otimes Vx_{i}^{**}))\|_{p'}^{w} \leq 2\,C\,\|(x_{i}^{**})\|_{\infty}\|(z_{i})\|_{p'}^{w}.
\]
Taking the supremum over $y^{*}\in B_{Y^{*}}$ gives us \eqref{eq:b} (with $c=2C$).

The equivalences of (b), (b$'$), (c), and (d) are clear from Proposition \ref{p:t3Eve}.
\end{proof}

Recalling that $\mathcal{C}_{p}(\Omega,X) = \mathcal{C}(\Omega)\hat{\otimes}_{d_{p}}X$, let us spell out the desired (immediate) consequence of Theorem \ref{t:3equiv}.

\begin{cor} \label{c:4equiv}
Let $X$ and $Y$ be Banach spaces and let $\Omega$ be a compact Hausdorff space. Let $1\leq p\leq\infty$. Assume that $S\in\mathcal{L}(\mathcal{C}(\Omega), \mathcal{L}(X, Y))$. The following statements are equivalent.

$($\emph{a}$)$ There exists an operator $U\in\mathcal{L}(\mathcal{C}_{p}(\Omega,X),Y)$ such that $U^{\#} = S$.

$($\emph{b}$)$ There exists a constant $c>0$ such that, for all finite systems $(x_{i})_{i=1}^{n}\subset X$ and $(\varphi_{i})_{i=1}^{n}\subset \mathcal{C}(\Omega)$,
\[
\|((S\varphi_{i})x_{i})\|_{p'}^{w} \leq c\,\|(x_{i})\|_{\infty}\|(\varphi_{i})\|_{p'}^{w}.
\]

$($\emph{b$'$}$)$ There exists a constant $c>0$ such that, for all $(x_{i})\in \ell_{\infty}(X)$ and $(\varphi_{i})\in \ell_{p'}^{w}(\mathcal{C}(\Omega))$, and for all $n\in\mathds{N}$,
\[
\|((S\varphi_{i})x_{i})_{i=n}^{\infty}\|_{p'}^{w} \leq c\,\|(x_{i})_{i=n}^{\infty}\|_{\infty}\|(\varphi_{i})_{i=n}^{\infty}\|_{p'}^{w}.
\]

$($\emph{c}$)$ If $(x_{i})\in \ell_{\infty}(X)$ and $(\varphi_{i})\in \ell_{p'}^{w}(\mathcal{C}(\Omega))$, then $((S\varphi_{i})x_{i})\in\ell_{p'}^{w}(Y)$.

$($\emph{d}$)$ If $(x_{i})\in c_{0}(X)$ and $(\varphi_{i})\in \ell_{p'}^{w}(\mathcal{C}(\Omega))$ $($or $(x_{i})\in \ell_{\infty}(X)$ and $(\varphi_{i})\in \ell_{p'}^{u}(\mathcal{C}(\Omega))$$)$, then $((S\varphi_{i})x_{i})\in\ell_{p'}^{u}(Y)$.
\end{cor}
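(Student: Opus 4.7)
The plan is to derive Corollary \ref{c:4equiv} as a direct specialization of Theorem \ref{t:3equiv}. The key identification is already supplied by the authors: by \cite[Theorem 3.8]{MOP1}, one has the isometric isomorphism $\mathcal{C}_{p}(\Omega,X) = \mathcal{C}(\Omega)\hat{\otimes}_{d_{p}}X$, so the question of whether $S\in\mathcal{L}(\mathcal{C}(\Omega),\mathcal{L}(X,Y))$ is of the form $U^{\#}$ for some $U\in\mathcal{L}(\mathcal{C}_{p}(\Omega,X),Y)$ is \emph{verbatim} the question of whether $S$ is the associated operator of some $U\in\mathcal{L}(Z\hat{\otimes}_{d_{p}}X,Y)$ with $Z=\mathcal{C}(\Omega)$.

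Concretely, I would simply invoke Theorem \ref{t:3equiv} with $Z := \mathcal{C}(\Omega)$ and replace the generic vectors $z_{i}\in Z$ by functions $\varphi_{i}\in\mathcal{C}(\Omega)$. The four conditions (a), (b), (b$'$), (c), (d) of Theorem \ref{t:3equiv} then translate \emph{word for word} into the four conditions of Corollary \ref{c:4equiv}: the norm $\|(z_{i})\|_{p'}^{w}$ becomes $\|(\varphi_{i})\|_{p'}^{w}$ (computed in $\mathcal{C}(\Omega)$), and the unconditional $p'$-summability condition $(z_{i})\in\ell_{p'}^{u}(Z)$ becomes $(\varphi_{i})\in\ell_{p'}^{u}(\mathcal{C}(\Omega))$. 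No additional machinery is needed, because every step is formal.

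There is essentially no obstacle: since Theorem \ref{t:3equiv} has already been proved in the abstract setting, and since the identification $\mathcal{C}_{p}(\Omega,X) = \mathcal{C}(\Omega)\hat{\otimes}_{d_{p}}X$ ensures that an operator on $\mathcal{C}_{p}(\Omega,X)$ is literally the same object as an operator on $\mathcal{C}(\Omega)\hat{\otimes}_{d_{p}}X$, the proof reduces to a one-line citation. The only thing worth remarking on is that the tensor norm $d_{p}$ is the right Chevet--Saphar norm whose dual space operator ideal is $\mathcal{P}_{p'}$, which is precisely the ingredient that produced the exponent $p'$ in conditions (b)--(d) of Theorem \ref{t:3equiv}; this is why the same exponent $p'$ appears in Corollary \ref{c:4equiv}.
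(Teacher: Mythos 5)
Your proposal is correct and matches the paper exactly: the authors also obtain Corollary \ref{c:4equiv} as an immediate specialization of Theorem \ref{t:3equiv} with $Z=\mathcal{C}(\Omega)$, via the identification $\mathcal{C}_{p}(\Omega,X)=\mathcal{C}(\Omega)\hat{\otimes}_{d_{p}}X$ from \cite[Theorem 3.8]{MOP1}. Nothing further is needed.
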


For the case $p=\infty$, and hence $p'=1$, recall from Section \ref{s3} that $\mathcal{C}_{\infty}(\Omega, X) = \mathcal{C}(\Omega,X)$ as Banach spaces. Thus, the above corollary also characterizes those operators $S\in\mathcal{L}(\mathcal{C}(\Omega),\mathcal{L}(X, Y))$ which are associated to an operator $U\in\mathcal{L}(\mathcal{C}(\Omega,X), Y)$. This develops further Dinculeanu's remark (see the Introduction) that such an operator $U$ does not necessarily exist for a given arbitrary $S$.

Another consequence of Theorem \ref{t:3equiv} is the next result.

\begin{cor}\label{Uexist}
Let $X$, $Y$, and $Z$ be Banach spaces. Let $1\leq p\leq\infty$. If $S\in\mathcal{P}_{p'}(Z,\mathcal{L}(X, Y))$, then there exists an operator $U\in\mathcal{L}(Z\hat{\otimes}_{d_{p}}X,Y)$ such that $U^{\#} = S$.
\end{cor}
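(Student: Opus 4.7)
The plan is to verify condition (b) of Theorem \ref{t:3equiv}, namely that there exists a constant $c>0$ such that
\[
\|((Sz_{i})x_{i})\|_{p'}^{w} \leq c\,\|(x_{i})\|_{\infty}\|(z_{i})\|_{p'}^{w}
\]
for every pair of finite systems $(x_i)_{i=1}^n \subset X$ and $(z_i)_{i=1}^n \subset Z$. Once this inequality is established, Theorem \ref{t:3equiv} immediately yields the desired $U \in \mathcal{L}(Z \hat{\otimes}_{d_p} X, Y)$ with $U^{\#} = S$.

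The verification is a two-step estimate. First, I would use the elementary pointwise bound $|y^*(y_i)| \le \|y_i\|$ applied for each $y^*\in B_{Y^*}$ in the definition of $\|\cdot\|_{p'}^w$, obtaining
\[
\|((Sz_{i})x_{i})\|_{p'}^{w} \le \|((Sz_{i})x_{i})\|_{p'} = \Bigl(\sum_{i=1}^{n} \|(Sz_{i})x_{i}\|^{p'}\Bigr)^{1/p'}.
\]
Second, I would use submultiplicativity $\|(Sz_i)x_i\| \le \|Sz_i\|\,\|x_i\| \le \|(x_i)\|_\infty \|Sz_i\|$ to pull out the sup-norm of the $x_i$:
\[
\|((Sz_{i})x_{i})\|_{p'} \le \|(x_{i})\|_{\infty} \Bigl(\sum_{i=1}^{n} \|Sz_{i}\|^{p'}\Bigr)^{1/p'} = \|(x_{i})\|_{\infty}\,\|(Sz_{i})\|_{p'}.
\]

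Finally, since $S \in \mathcal{P}_{p'}(Z, \mathcal{L}(X,Y))$, the definition of absolutely $p'$-summing operator gives $\|(Sz_i)\|_{p'} \le \|S\|_{\mathcal{P}_{p'}} \|(z_i)\|_{p'}^w$, and combining these inequalities yields condition (b) with $c = \|S\|_{\mathcal{P}_{p'}}$. There is no serious obstacle here; the result is essentially a direct reading of the summing property of $S$ through the characterization provided by Theorem \ref{t:3equiv}, the only minor point being the replacement of the weak norm by the strong norm in the first step, which is lossy in general but harmless here since the available hypothesis on $S$ is itself formulated in terms of the stronger $\ell_{p'}$-norm.
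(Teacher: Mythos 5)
Your proposal is correct and follows essentially the same route as the paper: the paper's proof consists of exactly the chain of inequalities $\|((Sz_{i})x_{i})\|_{p'}^{w}\leq\|((Sz_{i})x_{i})\|_{p'}\leq\|(x_{i})\|_{\infty}\|(Sz_{i})\|_{p'}\leq\|S\|_{\mathcal{P}_{p'}}\|(x_{i})\|_{\infty}\|(z_{i})\|_{p'}^{w}$, invoking Theorem \ref{t:3equiv}. No discrepancies to report.
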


\begin{proof}
Given $(x_i)\in\ell_{\infty}(X)$ and $(z_i)\in\ell_{p'}^{w}(Z)$, we have
\[
\|((Sz_{i})x_{i})\|_{p'}^{w} \leq \|((Sz_{i})x_{i})\|_{p'} \leq \|(x_{i})\|_{\infty} \|(Sz_{i})\|_{p'} \leq \|S\|_{\mathcal{P}_{p'}}\|(x_{i})\|_{\infty}\|(z_{i})\|_{p'}^{w}.
\]
\end{proof}

In Section \ref{s4}, we shall show that the converse of this result, in general, does not hold (see Example \ref{ex:1}).

\section{Characterizing associated operators: classical case}\label{s5}

Let $X$ and $Y$ be Banach spaces and let $\Omega$ be a compact Hausdorff space. Let $1 \leq p \leq \infty$. In Sections \ref{s3} and \ref{s2}, for a given operator $S\in\mathcal{L}(\mathcal{C}(\Omega), \mathcal{L}(X,Y))$, we obtained conditions for the existence of an operator $U\in\mathcal{L}(\mathcal{C}_{p}(\Omega,X),Y)$ such that $U^{\#} = S$. This was done in a rather general framework involving tensor products of Banach spaces. In this section, we are interested in specific conditions involving a representing measure of $S$. In terms of the representing measure of $S$, we establish a necessary condition for the existence of such an operator $U$ (Proposition \ref{p:condnecess}) that becomes also sufficient in the classical case of $\mathcal{C}(\Omega,X) = \mathcal{C}_{\infty}(\Omega,X)$ (Proposition \ref{p:condnecesuf}).

We denote by $\Sigma$ the $\sigma$-algebra of Borel subsets of $\Omega$. The space of $\Sigma$-simple functions with values in $X$ and the space of all bounded $\Sigma$-measurable functions with values in $X$ (i.e., the space of all functions from $\Omega$ into $X$ which are the uniform limit of a sequence of $\Sigma$-simple functions) are denoted by $\mathcal{S}(\Sigma,X)$ and $\mathcal{B}(\Sigma,X)$, respectively. In case $X=\mathds{K}$, we abbreviate them to $\mathcal{S}(\Sigma)$ and $\mathcal{B}(\Sigma)$, respectively.

It is well known that, for every operator $S\in\mathcal{L}(\mathcal{C}(\Omega),Y)$, there exists a vector measure $m:\Sigma\rightarrow Y^{**}$ of bounded semivariation such that
\[
S\varphi = \int_{\Omega}\varphi\,dm, \,\,\varphi\in\mathcal{C}(\Omega),
\]
(see, e.g., \cite[Theorem 1, p. 152]{DU}). The vector measure $m$ is called the \emph{representing measure} of $S$. We extend this result from $Y\cong\mathcal{L}(\mathds{K},Y)$ to $\mathcal{L}(X,Y)$: in the case when $S\in\mathcal{L}(\mathcal{C}(\Omega),\mathcal{L}(X,Y))$, we build a representing measure which takes its values in $\mathcal{L}(X,Y^{**})$ as follows.

So, let $S\in\mathcal{L}(\mathcal{C}(\Omega),\mathcal{L}(X,Y))$. For each $x\in X$, we define an operator $S_{x}\in\mathcal{L}(\mathcal{C}(\Omega),Y)$ by $S_{x}\varphi = (S\varphi)x$, $\varphi\in\mathcal{C}(\Omega)$. Let $m_{x}:\Sigma\rightarrow Y^{**}$ be the representing measure of $S_{x}$. We define
\[
m:\Sigma\rightarrow \mathcal{L}(X,Y^{**}),
\]
\[
E\mapsto m(E),
\]
by
\begin{equation} \label{measureS}
\langle y^{*},m(E)x\rangle = \langle y^{*},m_{x}(E)\rangle
\end{equation}
for all $x\in X$ and $y^{*}\in Y^{*}$. Using that $\|m_{x}\|(\Omega) = \|S_{x}\|$ (see, e.g., \cite[Theorem 1, p. 152]{DU}), a straightforward verification shows that the set map $m$ is a finitely additive vector measure and its range is bounded by $\|S\|$. Hence, $m$ is of bounded semivariation (see, e.g., \cite[Proposition 11, p. 4]{DU}), and, as we shall see soon (Proposition \ref{mofS}), $\|m\|(\Omega)=\|S\|$.

We can connect the integral with respect to $m$ with the integral with respect to $m_{x}$ as follows:
\begin{equation} \label{measurelation2}
\langle y^{*},\Big(\int_{\Omega}\varphi\, dm\Big)x\rangle= \langle y^{*},\int_{\Omega}\varphi\, dm_{x} \rangle
\end{equation}
for all $\varphi\in \mathcal{B}(\Sigma)$, $x\in X$ and $y^{*}\in Y^{*}$. (This equality follows easily from \eqref{measureS} using a standard argument which passes from characteristic functions to functions in $\mathcal{S}(\Sigma)$ by linearity, and finally to functions in $\mathcal{B}(\Sigma)$ by density.).

In particular, \eqref{measurelation2} is also true for $\varphi\in\mathcal{C}(\Omega)$. In this case, $\int_{\Omega}\varphi\,dm_{x} = S_{x}\varphi$, because $m_{x}$ is the representing measure of $S_{x}$. Thus
\[
\langle y^{*},\Big(\int_{\Omega}\varphi\, dm\Big)x\rangle = \langle S_{x}\varphi,y^{*}\rangle = \langle (S\varphi)x, y^{*}\rangle
\]
for all $\varphi\in\mathcal{C}(\Omega)$, $x\in X$, and $y^{*}\in Y^{*}$. Then,
\[
S\varphi = \int_{\Omega}\varphi\, dm
\]
for all $\varphi\in\mathcal{C}(\Omega)$, showing that $m$ is a representing measure of $S$. The above integral is the restriction to $\mathcal{C}(\Omega)$ of the elementary Bartle integral $\int_{\Omega}(\cdot)dm$ defined on $\mathcal{B}(\Sigma)$.

The following result, which is of independent interest, is now easy to deduce.

\begin{prop}\label{mofS}
Let $X$ and $Y$ be Banach spaces and let $\Omega$ be a compact Hausdorff space. Assume that $S\in\mathcal{L}(\mathcal{C}(\Omega),\mathcal{L}(X, Y))$ and let $m:\Sigma\rightarrow \mathcal{L}(X, Y^{**})$ be its representing measure. Then $\|m\|(\Omega)=\|S\|$.
\end{prop}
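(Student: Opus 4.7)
The plan is to reduce the identity $\|m\|(\Omega)=\|S\|$ to a statement about the scalar-valued family $\{m_x\}_{x\in B_X}$, by exploiting the defining relation \eqref{measureS} together with the classical case already cited from \cite[Theorem 1, p. 152]{DU}.

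First I would recall that, for the $\mathcal{L}(X,Y^{**})$-valued measure $m$, the semivariation is
\[
\|m\|(\Omega)=\sup\Big\{\Big\|\sum_{i=1}^{n}\alpha_{i}\,m(E_{i})\Big\|_{\mathcal{L}(X,Y^{**})}:\ (E_{i})_{i=1}^{n}\subset\Sigma\ \text{disjoint},\ |\alpha_{i}|\le 1\Big\}.
\]
Using \eqref{measureS}, for every $x\in X$ we have $m(E)x=m_{x}(E)$ in $Y^{**}$, hence
\[
\Big\|\sum_{i=1}^{n}\alpha_{i}\,m(E_{i})\Big\|_{\mathcal{L}(X,Y^{**})}=\sup_{x\in B_{X}}\Big\|\sum_{i=1}^{n}\alpha_{i}\,m_{x}(E_{i})\Big\|_{Y^{**}}.
\]
Swapping the order of the two suprema (over partitions/coefficients and over $x\in B_{X}$) and recognising the inner expression as the semivariation of $m_{x}$ yields
\[
\|m\|(\Omega)=\sup_{x\in B_{X}}\|m_{x}\|(\Omega).
\]

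Second, for each fixed $x\in X$ the measure $m_{x}$ is by construction the Diestel--Uhl representing measure of the operator $S_{x}\in\mathcal{L}(\mathcal{C}(\Omega),Y)$ defined by $S_{x}\varphi=(S\varphi)x$. The classical Riesz-type theorem \cite[Theorem 1, p. 152]{DU} gives $\|m_{x}\|(\Omega)=\|S_{x}\|$. Substituting into the formula above yields
\[
\|m\|(\Omega)=\sup_{x\in B_{X}}\|S_{x}\|.
\]

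Third, a direct computation evaluates the right-hand side: since $\|S_{x}\|=\sup_{\varphi\in B_{\mathcal{C}(\Omega)}}\|(S\varphi)x\|$ and the two suprema commute,
\[
\sup_{x\in B_{X}}\|S_{x}\|=\sup_{x\in B_{X},\,\varphi\in B_{\mathcal{C}(\Omega)}}\|(S\varphi)x\|=\sup_{\varphi\in B_{\mathcal{C}(\Omega)}}\|S\varphi\|_{\mathcal{L}(X,Y)}=\|S\|.
\]
Combining the three displayed equalities gives $\|m\|(\Omega)=\|S\|$.

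I do not expect any substantial obstacle; the only point requiring a moment's care is the first step, namely justifying the commutation of the supremum over $x\in B_{X}$ with the supremum defining the semivariation, which amounts to the observation that the operator norm on $\mathcal{L}(X,Y^{**})$ is itself a supremum over $B_{X}$. The remaining steps are a direct appeal to the cited scalar-valued Riesz representation theorem and the elementary identity $\|S\|=\sup_{x\in B_{X}}\|S_{x}\|$.
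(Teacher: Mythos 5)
Your proof is correct, but it follows a genuinely different route from the paper's. The paper extends $S$ to the integration operator $\hat{S}\in\mathcal{L}(\mathcal{B}(\Sigma),\mathcal{L}(X,Y^{**}))$, invokes the identity $\|\hat{S}\|=\|m\|(\Omega)$ from Diestel--Uhl, gets $\|S\|\leq\|\hat{S}\|$ because $\hat{S}$ extends $S$, and gets the reverse inequality by recognising $\hat{S}$ as the restriction of $S^{**}$ to $\mathcal{B}(\Sigma)\subset\mathcal{C}(\Omega)^{**}$. You instead slice by $x\in B_{X}$: since the norm of $\mathcal{L}(X,Y^{**})$ is itself a supremum over $B_{X}$ and $m(E)x=m_{x}(E)$, the suprema defining $\|m\|(\Omega)$ commute with the supremum over $x$, giving $\|m\|(\Omega)=\sup_{x\in B_{X}}\|m_{x}\|(\Omega)$; the classical case then gives $\|m_{x}\|(\Omega)=\|S_{x}\|$, and $\sup_{x\in B_{X}}\|S_{x}\|=\|S\|$ is elementary. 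Your argument is arguably more self-contained, since it needs only the $Y$-valued Riesz-type theorem already used to define the $m_{x}$ and the definition of semivariation, and it avoids any bidual or weak-star extension machinery. What the paper's route buys is the identification $\hat{S}=S^{**}|_{\mathcal{B}(\Sigma)}$ and the norm identity $\|\hat{S}\|=\|m\|(\Omega)$ for the operator-valued integration operator, which are reused later (in Proposition \ref{p:condnecess} and Example \ref{ex:2}), so the authors get those facts essentially for free from their proof. The only point in your write-up worth making explicit is that $\|m\|(\Omega)$ here denotes the ordinary semivariation of $m$ as a measure with values in the Banach space $\mathcal{L}(X,Y^{**})$ (as in \cite[Proposition 11, p.~4]{DU}), which is exactly what licenses your first displayed formula.
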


\begin{proof}
Denoting by $\hat{S}\in\mathcal{L}(\mathcal{B}(\Sigma), \mathcal{L}(X, Y^{**}))$ the integration operator $\hat{S}\varphi = \int_{\Omega}\varphi\,dm$, $\varphi\in\mathcal{B}(\Sigma)$, we see that $\hat{S}$ extends $S$ (here, as usual, we identify $Y$ with a subspace of $Y^{**}$). Hence,
\[
\|S\| \leq \|\hat{S}\| = \|m\|(\Omega);
\]
for the last equality see, e.g., \cite[Theorem 13, p. 6]{DU}.

On the other hand, when we look at $\mathcal{B}(\Sigma)$ as a closed subspace of $\mathcal{C}(\Omega)^{**}$, then $\hat{S}$ is the restriction to $\mathcal{B}(\Sigma)$ of $S^{**}\in\mathcal{L}(\mathcal{C}(\Omega)^{**}, \mathcal{L}(X,Y^{**})^{**})$ (this is a standard argument; see, e.g., \cite[pp. 152--153]{DU}). Hence,
\[
\|m\|(\Omega) = \|\hat{S}\| \leq \|S^{**}\| = \|S\|.
\]
\end{proof}

Let us present now the promised necessary condition announced at the beginning of this section. We denote by $\chi_{E}$ the \emph{characteristic function} of $E\in\Sigma$.

\begin{prop} \label{p:condnecess}
Let $X$ and $Y$ be Banach spaces and let $\Omega$ be a compact Hausdorff space. Assume that $S\in\mathcal{L}(\mathcal{C}(\Omega),\mathcal{L}(X, Y))$ and let $m:\Sigma\rightarrow \mathcal{L}(X, Y^{**})$ be its representing measure. Let $1\leq p\leq\infty$. If there exists an operator $U\in \mathcal{L}(\mathcal{C}_{p}(\Omega, X),Y)$ such that $U^{\#}=S$, then there exists a constant $c>0$ such that, for every sequence $(E_{i})$ of pairwise disjoint sets in $\Sigma$ and every sequence $(x_{i})$ in $B_{X}$, the inequality $\|(m(E_{i})x_{i})\|_{p'}^{w}\leq c$ holds.
\end{prop}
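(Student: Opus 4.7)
The plan is to derive the stated bound from the quantitative criterion in Corollary~\ref{c:4equiv}(b) by approximating the characteristic functions $\chi_{E_{i}}$ by continuous functions $\varphi_{i}\in\mathcal{C}(\Omega)$ with pairwise disjoint supports. Assume such a $U$ exists: by Corollary~\ref{c:4equiv}(b) there is a constant $c>0$ with
$$\|((S\varphi_{i})x_{i})\|_{p'}^{w}\leq c\,\|(x_{i})\|_{\infty}\,\|(\varphi_{i})\|_{p'}^{w}$$
for every finite system $(\varphi_{i})\subset\mathcal{C}(\Omega)$, $(x_{i})\subset X$. A standard $\ell_{p}$--$\ell_{p'}$ duality argument (computing $\sup_{\beta}\|\sum\beta_{i}z_{i}^{**}\|$ first and then exchanging suprema) shows that for any finite sequence $(z_{i}^{**})\subset Y^{**}$,
$$\|(z_{i}^{**})\|_{p'}^{w}=\sup_{y^{*}\in B_{Y^{*}}}\Big(\sum_{i}|\langle z_{i}^{**},y^{*}\rangle|^{p'}\Big)^{1/p'}.$$
Hence, by truncation, it suffices to prove that $(\sum_{i=1}^{n}|\mu_{i}(E_{i})|^{p'})^{1/p'}\leq c$ for every $n$, every $y^{*}\in B_{Y^{*}}$, every pairwise disjoint $(E_{i})_{i=1}^{n}\subset\Sigma$, and every $(x_{i})_{i=1}^{n}\subset B_{X}$, where $\mu_{i}:=\langle y^{*},m_{x_{i}}(\cdot)\rangle$ is a regular scalar Borel measure on $\Omega$.

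The construction of the continuous approximants $\varphi_{i}$ is the main obstacle, since they must simultaneously have pairwise disjoint supports (to control $\|(\varphi_{i})\|_{p'}^{w}$) and approximate $\chi_{E_{i}}$ in integral against \emph{all} of the measures $\mu_{1},\ldots,\mu_{n}$. I would consolidate these into the single positive finite regular Borel measure $\mu:=\sum_{i=1}^{n}|\mu_{i}|$. Given $\varepsilon>0$: by inner regularity, pick compact $K_{i}\subset E_{i}$ with $\mu(E_{i}\setminus K_{i})<\varepsilon$; since the $K_{i}$ are pairwise disjoint and $\Omega$ is normal, enclose them in pairwise disjoint open sets $O_{i}\supset K_{i}$; by outer regularity, pick open $W_{i}\supset K_{i}$ with $\mu(W_{i}\setminus K_{i})<\varepsilon$; and set $V_{i}:=O_{i}\cap W_{i}$. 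Urysohn's lemma then yields $\varphi_{i}\in\mathcal{C}(\Omega)$ with $0\leq\varphi_{i}\leq 1$, $\varphi_{i}|_{K_{i}}\equiv 1$, and $\operatorname{supp}\varphi_{i}\subset V_{i}$, so that the $\varphi_{i}$ have pairwise disjoint supports and $|\mu_{j}|(V_{i}\setminus K_{i})<\varepsilon$ for all $i,j$.

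A short check (using $\sum_{i}|\nu|(V_{i})\leq\|\nu\|$ for $\nu\in M(\Omega)$ and pairwise disjoint $V_{i}$, together with $a^{p'}\leq a$ for $a\in[0,1]$, $p'\geq 1$) gives $\|(\varphi_{i})\|_{p'}^{w}\leq 1$ in $\mathcal{C}(\Omega)$. Together with $\|(x_{i})\|_{\infty}\leq 1$, Corollary~\ref{c:4equiv}(b) and the identity $y^{*}((S\varphi_{i})x_{i})=\int\varphi_{i}\,d\mu_{i}$ (immediate from \eqref{measurelation2}) give
$$\Big(\sum_{i=1}^{n}\Big|\int\varphi_{i}\,d\mu_{i}\Big|^{p'}\Big)^{1/p'}\leq c.$$
Since $|\int\varphi_{i}\,d\mu_{i}-\mu_{i}(E_{i})|\leq|\mu_{i}|(E_{i}\setminus K_{i})+|\mu_{i}|(V_{i}\setminus E_{i})\leq 2\varepsilon$, the triangle inequality in $\ell_{p'}^{n}$ yields $(\sum|\mu_{i}(E_{i})|^{p'})^{1/p'}\leq c+2\varepsilon\,n^{1/p'}$; letting $\varepsilon\to 0$ (with $n$ fixed) completes the finite case, and the infinite-sequence statement follows by truncation, as the weak-$p'$ norm of a sequence equals the supremum over $n$ of the norms of its first $n$ terms.
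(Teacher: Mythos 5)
Your proof is correct, but it takes a genuinely different route from the paper's. The paper extends $U$ to an operator $\hat{U}\in\mathcal{L}(\mathcal{B}(\Sigma)\hat{\otimes}_{d_{p}}X,Y^{**})$ with $\hat{U}^{\#}=\hat{S}$ (the Bartle integration operator), via the dualities $(\mathcal{C}(\Omega)\hat{\otimes}_{d_{p}}X)^{*}=\mathcal{P}_{p'}(\mathcal{C}(\Omega),X^{*})$ and $(\mathcal{B}(\Sigma)\otimes_{d_{p}}X)^{*}=\mathcal{P}_{p'}(\mathcal{B}(\Sigma),X^{*})$ together with the fact that biadjoints of absolutely $p'$-summing operators are absolutely $p'$-summing with the same norm; it then applies Theorem \ref{t:3equiv}(b) to $\hat{S}$ with $\varphi_{i}=\chi_{E_{i}}$, using only the estimate $\|(\chi_{E_{i}})\|_{1}^{w}\leq 1$ in $\mathcal{B}(\Sigma)$ computed against $ba(\Sigma)$. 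You instead stay inside $\mathcal{C}(\Omega)$, invoke Corollary \ref{c:4equiv}(b) directly, and approximate the $\chi_{E_{i}}$ by Urysohn functions with pairwise disjoint supports, exploiting the regularity of the scalar measures $\mu_{i}=\langle y^{*},m_{x_{i}}(\cdot)\rangle=S_{x_{i}}^{*}y^{*}$. The key points of your argument --- that disjointly supported $[0,1]$-valued continuous functions satisfy $\|(\varphi_{i})\|_{p'}^{w}\leq 1$, that the error $|\int\varphi_{i}\,d\mu_{i}-\mu_{i}(E_{i})|$ is at most $\mu(E_{i}\setminus K_{i})+\mu(V_{i}\setminus K_{i})<2\varepsilon$, and the reduction of $\|\cdot\|_{p'}^{w}$ in $Y^{**}$ to functionals from $B_{Y^{*}}$ by exchanging suprema --- are all sound, and the $n^{1/p'}$ loss is harmless since $\varepsilon\to 0$ is taken with $n$ fixed before passing to the supremum over $n$. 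What each approach buys: yours is more elementary and self-contained, avoiding the bidual-extension machinery and the tensor-product duality over $\mathcal{B}(\Sigma)$ at the cost of a measure-theoretic approximation; the paper's yields the stronger by-product that the inequality of Theorem \ref{t:3equiv}(b) holds for \emph{all} bounded Borel functions $\varphi_{i}$, not just characteristic functions of disjoint sets, which is reused in Example \ref{ex:2}.
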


\begin{proof}
Let $S\in\mathcal{L}(\mathcal{C}(\Omega),\mathcal{L}(X, Y))$ and let $m:\Sigma\rightarrow \mathcal{L}(X, Y^{**})$ be its representing measure. As in the proof of Proposition \ref{mofS}, we define $\hat{S}\in\mathcal{L}(\mathcal{B}(\Sigma),\mathcal{L}(X, Y^{**}))$ by $\hat{S}\varphi = \int_{\Omega}\varphi\,dm$, $\varphi\in \mathcal{B}(\Sigma)$. Then $\hat{S}|_{\mathcal{C}(\Omega)} = S$ and, by \eqref{measurelation2}, recalling that $\int_{\Omega}\varphi\,dm_{x} = S_{x}^{**}\varphi$, $\varphi\in\mathcal{B}(\Sigma)$ (where $S_{x}\in\mathcal{L}(\mathcal{C}(\Omega),Y)$ is defined by $S_{x}\varphi = (S\varphi)x$, $\varphi\in\mathcal{C}(\Omega)$, $x\in X$), we have
\begin{equation} \label{eq:prop1}
\langle y^{*}, (\hat{S}\varphi)x \rangle = \langle S_{x}^{*}y^{*},\varphi \rangle
\end{equation}
for all $\varphi\in\mathcal{B}(\Sigma)\subset \mathcal{C}(\Omega)^{**}$, $x\in X$, and $y^{*}\in Y^{*}$.

Let $U\in \mathcal{L}(\mathcal{C}_{p}(\Omega, X),Y)$ be such that $U^{\#}=S$. We can define $\hat{U}\in\mathcal{L}(\mathcal{B}(\Sigma)\hat{\otimes}_{d_{p}}X,Y^{**})$ by $\hat{U}(\varphi\otimes x) = (\hat{S}\varphi)x$. Indeed, we already know that $\hat{U}:\mathcal{B}(\Sigma)\otimes_{d_{p}}X\rightarrow Y^{**}$ is a linear operator. It suffices to show that $\hat{U}$ is bounded on $\mathcal{B}(\Sigma)\otimes_{d_{p}}X$. Recall that $\mathcal{C}_{p}(\Omega,X) = \mathcal{C}(\Omega)\hat{\otimes}_{d_{p}}X$. Let $\varphi\in\mathcal{C}(\Omega)$, $x\in X$, and $y^{*}\in Y^{*}$. Using that $U(\varphi\otimes x)=S_{x}\varphi$ in $Y$, hence $\langle \varphi\otimes x, U^{*}y^{*}\rangle = \langle \varphi , S_{x}^{*}y^{*}\rangle$, and using also that $(\mathcal{C}(\Omega)\hat{\otimes}_{d_{p}}X)^{*} = \mathcal{P}_{p'}(\mathcal{C}(\Omega),X^{*})$ as Banach spaces, we get that $(U^{*}y^{*})^{*}x = S_{x}^{*}y^{*}$ in $\mathcal{C}(\Omega)^{*}$. Since $\mathcal{B}(\Sigma)\subset\mathcal{C}(\Omega)^{**}$, for all $\varphi\in\mathcal{B}(\Sigma)$, $x\in X$, and $y^{*}\in Y^{*}$, we have
\begin{equation} \label{eq:prop2}
\langle S_{x}^{*}y^{*},\varphi\rangle = \langle x, (U^{*}y^{*})^{**}\varphi\rangle = \langle \varphi\otimes x, U_{y^{*}}\rangle,
\end{equation}
where $U_{y^{*}}:=(U^{*}y^{*})^{**}|_{\mathcal{B}(\Sigma)}$. It is well known (see, e.g., \cite[Theorem 2.17 and 2.19]{DJT}) that $(U^{*}y^{*})^{**}\in\mathcal{P}_{p'}(\mathcal{C}(\Omega)^{**},X^{*})$ and $\|(U^{*}y^{*})^{**}\|_{\mathcal{P}_{p'}} = \|U^{*}y^{*}\|_{\mathcal{P}_{p'}}$. It follows that $U_{y^{*}}\in\mathcal{P}_{p'}(\mathcal{B}(\Sigma),X^{*})$ and  $\|U_{y^{*}}\|_{\mathcal{P}_{p'}} = \|U^{*}y^{*}\|_{\mathcal{P}_{p'}}$, because $\mathcal{C}(\Omega) \subset \mathcal{B}(\Sigma) \subset \mathcal{C}(\Omega)^{**}$ as closed subspaces. Recalling that $\mathcal{P}_{p'}(\mathcal{B}(\Sigma),X^{*}) = (\mathcal{B}(\Sigma)\otimes_{d_{p}} X)^{*}$ and using \eqref{eq:prop1} and \eqref{eq:prop2}, we may write
\[
\langle y^{*},\hat{U}(\varphi\otimes x)\rangle = \langle \varphi\otimes x, U_{y^{*}}\rangle, \ \ \varphi\in\mathcal{B}(\Sigma), x\in X, y^{*}\in Y^{*}.
\]
Hence, for any $v=\sum_{i=1}^{n}\varphi_{i}\otimes x_{i}\in\mathcal{B}(\Sigma)\otimes_{d_{p}}X$ and $y^{*}\in Y^{*}$, we get that
\[
|\langle y^{*},\hat{U}v\rangle | = \Big|\sum_{i=1}^{n}\langle \varphi_{i}\otimes x_{i}, U_{y^{*}}\rangle \Big| = |\langle v, U_{y^{*}}\rangle| \leq \|U_{y^{*}}\|_{\mathcal{P}_{p'}}\|v\|_{d_{p}}.
\]
But since
\[
\|U_{y^{*}}\|_{\mathcal{P}_{p'}} = \|U^{*}y^{*}\|_{\mathcal{P}_{p'}} = \sup \Big\{ |\langle v, U^{*}y^{*}\rangle|\,:\,v\in B_{\mathcal{C}(\Omega)\otimes_{d_{p}}X}\Big\}
\]
\[
= \sup \Big\{ |\langle Uv, y^{*}\rangle|\,:\,v\in B_{\mathcal{C}(\Omega)\otimes_{d_{p}}X}\Big\} \leq \|U\|\,\|y^{*}\|,
\]
$\hat{U}$ is bounded on $\mathcal{B}(\Sigma)\otimes_{d_{p}}X$, as desired.

We have defined $\hat{U}$ so that $\hat{U}^{\#} = \hat{S}$. Hence, by Theorem \ref{t:3equiv}, there exists a constant $c>0$ such that, for all $(x_{i})\in\ell_{\infty}(X)$ and $(\varphi_{i})\in\ell_{p'}^{w}(\mathcal{B}(\Sigma))$,
\begin{equation} \label{eq:gen}
\|((\hat{S}\varphi_{i})x_{i})\|_{p'}^{w} \leq c\,\|(x_{i})\|_{\infty}\|(\varphi_{i})\|_{p'}^{w}.
\end{equation}

Let $(E_{i})$ be a sequence of pairwise disjoint sets in $\Sigma$. Then $(\chi_{E_{i}})\in\ell_{p'}^{w}(\mathcal{B}(\Sigma))$ and $\|(\chi_{E_{i}})\|_{p'}^{w}\leq1$. Indeed, let us show that even $\|(\chi_{E_{i}})\|_{1}^{w}\leq 1$. By definition,
\[
\|(\chi_{E_{i}})\|_{1}^{w} = \sup\Big\{ \|(\langle \chi_{E_{i}},\mu\rangle )_{i}||_{1}\,:\,\mu\in B_{\mathcal{B}(\Sigma)^{*}}\Big\}.
\]
Also, it is well known that $\mathcal{B}(\Sigma)^{*} = ba(\Sigma)$ as Banach spaces, where $ba(\Sigma)$ denotes the space of all bounded additive measures defined on $\Sigma$ with the variation norm, and the duality works as follows:
\[
\langle \varphi,\mu\rangle = \int_{\Omega} \varphi\,d\mu \,\, ,\varphi\in\mathcal{B}(\Sigma), \mu\in ba(\Sigma)
\]
(see, e.g., \cite[Theorem 1, p. 258]{DS}). Since, in particular,
\[
\langle \chi_{E},\mu\rangle = \mu(E), \mbox{ for all } E\in\Sigma,
\]
we have
\[
\|(\chi_{E_{i}})\|_{1}^{w} = \sup\Big\{ \|(\mu(E_{i}))_{i}||_{1}\,:\,\mu\in B_{ba(\Sigma)}\Big\}.
\]
But
\[
\|(\mu(E_{i}))_{i}||_{1} \leq |\mu|(\bigcup_{i=1}^{\infty}E_{i}) \leq |\mu|(\Omega),
\]
giving that $\|(\chi_{E_{i}})\|_{1}^{w} \leq 1$.

Therefore, since $\hat{S}\chi_{E_{i}} = m(E_{i})$, we get from \eqref{eq:gen} the desired inequality for $(x_{i})\subset B_{X}$.
\end{proof}

\begin{rmk} \label{r:4.3}
Using the general inequality $\|z_{i}\|\leq \|(z_{i})_{i}\|_{1}^{w}$ for all $i$, it is clear that $\|(\chi_{E_{i}})\|_{1}^{w} = 1$ if there exists $E_{i_{0}}\neq \emptyset$. An alternative proof of the inequality $\|(\chi_{E_{i}})\|_{1}^{w} \leq 1$ can be done using extreme points. It is well known that we only need to use any norming subset of $\mathcal{B}(\Sigma)^{*}$ for evaluating $\|(\chi_{E_{i}})\|_{1}^{w}$ and a particularly useful example of this kind is the set of all extreme points of $B_{\mathcal{B}(\Sigma)^{*}}$ (see, e.g., \cite[p. 36]{DJT}). This set coincides with $\{\alpha \delta_{\omega} : \alpha\in\mathds{K}, |\alpha|=1, \omega\in\Omega\}$, where $\delta_{\omega}\in \mathcal{B}(\Sigma)^{*}$ is defined by $\langle \varphi, \delta_{\omega} \rangle = \varphi(\omega)$, $\omega \in \Omega$ (see, e.g., \cite[Chapter V, Theorem 8.4]{C}). Then
\[
\|(\chi_{E_{i}})\|_{1}^{w} =  \sup \Big\{ \| (\langle \chi_{E_{i}}, \alpha \delta_{\omega}\rangle)_{i}\|_{1} : \alpha\in\mathds{K}, |\alpha|=1, \omega\in\Omega\ \Big\}
\]
\[
= \sup \Big\{  \|(\alpha \chi_{E_{i}}(\omega))_{i}\|_{1} : \alpha\in\mathds{K}, |\alpha|=1, \omega\in\Omega\ \Big\}
\]
\[
= \sup \Big\{  \|(\chi_{E_{i}}(\omega))_{i}\|_{1} : \omega\in\Omega\ \Big\} \leq 1,
\]
where the last inequality uses that $(E_{i})$ is a sequence of pairwise disjoint sets in $\Sigma$ (this inequality becomes an equality if there exists $E_{i_{0}}\neq \emptyset$).
\end{rmk}

In Section \ref{s4}, we shall see that the necessary condition from Proposition \ref{p:condnecess}, in general, is not sufficient (see Example \ref{ex:2}). But it is so for the case $p=\infty$, as the next proposition shows.

\begin{prop} \label{p:condnecesuf}
Let $X$ and $Y$ be Banach spaces and let $\Omega$ be a compact Hausdorff space. Assume that $S\in\mathcal{L}(\mathcal{C}(\Omega),\mathcal{L}(X, Y))$ and let $m:\Sigma\rightarrow \mathcal{L}(X, Y^{**})$ be its representing measure. The following statements are equivalent.

$($\emph{a}$)$ There exists an operator $U\in\mathcal{L}(\mathcal{C}(\Omega, X),Y)$ such that $U^{\#}=S$.

$($\emph{b}$)$ There exists a constant $c>0$ such that, for every sequence $(E_{i})$ of pairwise disjoint sets in $\Sigma$ and every sequence $(x_{i})$ in $B_{X}$, the inequality $\|(m(E_{i})x_{i})\|_{1}^{w}\leq c$ holds.
\end{prop}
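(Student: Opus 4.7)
Direction (a)$\Rightarrow$(b) is the content of Proposition \ref{p:condnecess} specialized to $p=\infty$, so I would concentrate on (b)$\Rightarrow$(a). The plan is to deduce (a) from Corollary \ref{c:4equiv} with $p=\infty$ (so $p'=1$) by verifying its condition (b): for some constant $c'>0$,
\[
\|((S\varphi_i)x_i)_{i=1}^n\|_1^w \le c'\,\|(x_i)\|_\infty\,\|(\varphi_i)\|_1^w
\]
holds for every finite system $(x_i)_{i=1}^n\subset X$ and $(\varphi_i)_{i=1}^n\subset\mathcal{C}(\Omega)$.

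I first move to $\mathcal{B}(\Sigma)$ by extending $S$ to $\hat{S}\in\mathcal{L}(\mathcal{B}(\Sigma),\mathcal{L}(X,Y^{**}))$ via the Bartle integral $\hat{S}\varphi=\int_\Omega\varphi\,dm$, as in the proof of Proposition \ref{mofS}; this extension satisfies $\hat{S}|_{\mathcal{C}(\Omega)}=S$ and $\hat{S}\chi_E=m(E)$. The core step is to prove the above inequality for $\hat{S}$ on simple functions. Given $\varphi_1,\ldots,\varphi_n\in\mathcal{S}(\Sigma)$, I pass to a common refinement, writing $\varphi_i=\sum_{j=1}^N a_{i,j}\chi_{F_j}$ with pairwise disjoint $F_j\in\Sigma$. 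Evaluating at the Dirac functionals $\delta_\omega\in\mathcal{B}(\Sigma)^*$ for $\omega\in F_j$ (as in Remark \ref{r:4.3}) yields
\[
\max_{1\le j\le N}\sum_{i=1}^n|a_{i,j}|\le\|(\varphi_i)\|_1^w.
\]

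The key algebraic step is the rearrangement
\[
\sum_{i=1}^n\epsilon_i(\hat{S}\varphi_i)x_i=\sum_{j=1}^N m(F_j)w_j,\qquad w_j=\sum_{i=1}^n\epsilon_i a_{i,j}x_i,
\]
valid for any scalars $|\epsilon_i|\le 1$. Writing $w_j=\gamma_j\tilde w_j$ with $\gamma_j=\|w_j\|$ and $\tilde w_j\in B_X$ (any $\tilde w_j$ when $\gamma_j=0$), hypothesis (b) applied to the disjoint $(F_j)$ and $(\tilde w_j)$ gives $\|(m(F_j)\tilde w_j)_{j=1}^N\|_1^w\le c$; using the identity $\|(z_j)\|_1^w=\sup_{|b_j|\le 1}\|\sum_j b_j z_j\|$, this translates to $\|\sum_j\gamma_j m(F_j)\tilde w_j\|\le c\max_j\gamma_j$. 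Combining with $\gamma_j\le\|(x_i)\|_\infty\sum_i|a_{i,j}|$ and taking the supremum over $|\epsilon_i|\le 1$ yields the target inequality for $\hat{S}$ on simple functions, with constant $c$.

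Since simple functions are uniformly dense in $\mathcal{B}(\Sigma)$ and $\hat{S}$ is continuous, the inequality transfers to all of $\mathcal{B}(\Sigma)$ by passage to the limit (for each fixed $n$ and $(x_i)$, both sides are continuous in the $\varphi_i$'s), and in particular to $\mathcal{C}(\Omega)\subset\mathcal{B}(\Sigma)$, giving the inequality for $S$ itself (the $\ell_1^w$-norms computed in $Y$ and in $Y^{**}$ coincide on sequences in $Y$). Corollary \ref{c:4equiv} then produces the required $U\in\mathcal{L}(\mathcal{C}(\Omega,X),Y)$ with $U^{\#}=S$. The main technical point I anticipate is the rescaling $w_j=\gamma_j\tilde w_j$ together with the $\ell_1^w$/$\ell_\infty$ duality step, which is exactly where hypothesis (b) gets used in the form needed; the common-refinement reduction, uniform approximation, and continuity passage are routine.
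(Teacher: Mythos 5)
Your proof is correct, but it follows a genuinely different route from the paper's. The paper proves (b)$\Rightarrow$(a) by a direct construction: it defines $V$ on the $X$-valued simple functions $\mathcal{S}(\Sigma,X)$ by $V\bigl(\sum_i\chi_{E_i}x_i\bigr)=\sum_i m(E_i)x_i$, uses hypothesis (b) together with $\|f\|=\max_i\|x_i\|$ to obtain $\|Vf\|\le c\,\|f\|$, extends $V$ by continuity to $\mathcal{B}(\Sigma,X)$, and sets $U=V|_{\mathcal{C}(\Omega,X)}$; it must then check separately that $U^{\#}=S$ (by density of $\mathcal{S}(\Sigma)$ in $\mathcal{B}(\Sigma)$) and that ${\rm ran}\,U\subset Y$ (using $\mathcal{C}(\Omega,X)=\overline{\mathrm{span}}\{\varphi x\}$). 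You instead verify condition (b) of Corollary \ref{c:4equiv} for $p=\infty$, and the ingredients you use are all sound: the common-refinement rearrangement $\sum_i\epsilon_i(\hat{S}\varphi_i)x_i=\sum_j m(F_j)w_j$, the rescaling $w_j=\gamma_j\tilde w_j$ combined with the duality formula $\|(z_j)\|_1^w=\sup_{|b_j|\le1}\bigl\|\sum_j b_jz_j\bigr\|$ (this is exactly where (b) enters), the bound $\max_j\sum_i|a_{i,j}|\le\|(\varphi_i)\|_1^w$ via Dirac functionals on nonempty $F_j$, and the continuity passage from $\mathcal{S}(\Sigma)$ to $\mathcal{C}(\Omega)$ (where the $\ell_1^w$-norms in $\mathcal{C}(\Omega)$ and $\mathcal{B}(\Sigma)$, and in $Y$ and $Y^{**}$, agree by Hahn--Banach). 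What the paper's route buys is self-containedness and an explicit operator $U$ (essentially a Bartle-type integral of $X$-valued functions), at the cost of the extra verification ${\rm ran}\,U\subset Y$; what your route buys is that the $p=\infty$ case is subsumed into the general machinery of Theorem \ref{t:3equiv}/Corollary \ref{c:4equiv} (so $U$ automatically lands in $Y$), at the cost of invoking that heavier machinery and the combinatorial refinement argument. Both are complete proofs.
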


\begin{proof}
We only need to prove (b)$\Rightarrow$(a). Similarly to the construction of the elementary Bartle integral (the case when $X=\mathds{K}$) (see, e.g., \cite[pp. 5--6]{DU}), we have a linear operator $V:\mathcal{S}(\Sigma,X)\rightarrow Y^{**}$ defined by $Vf = \sum_{i=1}^{n}m(E_{i})x_{i}$ for $f=\sum_{i=1}^{n}\chi_{E_{i}}x_{i}$, where $E_{1},\ldots,E_{n}$ are pairwise disjoint sets in $\Sigma$ and $x_{i}\in X\setminus\{0\}$. Using that $\|f\| = \max_{i}\|x_{i}\|$, we have that
\[
\|Vf\| = \sup \Big\{ \Big|\langle y^{*},\sum_{i=1}^{n}m(E_{i})x_{i}\rangle\Big| \, : \, \|y^{*}\|\leq1 \Big\}
\]
\[
\leq \sup \Big\{ \sum_{i=1}^{n}\Big|\langle y^{*},m(E_{i})\frac{x_{i}}{\|x_{i}\|}\rangle\Big|\,\|x_{i}\| \, : \, \|y^{*}\|\leq1 \Big\}
\]
\[
\leq \sup \Big\{ \sum_{i=1}^{n} \Big|\langle y^{*},m(E_{i})\frac{x_{i}}{\|x_{i}\|}\rangle\Big| \, : \, \|y^{*}\|\leq1 \Big\} \max_{i}\|x_{i}\|
\]
\[
= \Big\| \Big( m(E_{i})\Big( \frac{x_{i}}{\|x_{i}\|} \Big)\Big) \Big\|_{1}^{w} \|f\|  \leq c\,\|f\|.
\]
Therefore, $V$ is continuous and it can be extended by continuity to $\mathcal{B}(\Sigma,X)$. We keep calling its extension $V$. We shall show that $U = V|_{\mathcal{C}(\Omega, X)}$ is the desired operator.

As in the proof of Proposition \ref{p:condnecess}, we have $\hat{S}\in\mathcal{L}(\mathcal{B}(\Sigma),\mathcal{L}(X,Y^{**}))$ defined by $\hat{S}\varphi = \int_{\Omega}\varphi\,dm$, $\varphi\in \mathcal{B}(\Sigma)$. Let $x\in X$. Since
\[
(\hat{S}\chi_{E})x = m(E)x = V(\chi_{E}x), \ E\in\Sigma,
\]
and $\mathcal{B}(\Sigma) = \overline{\mathcal{S}(\Sigma)}$,
\[
(\hat{S}\varphi)x = V(\varphi x), \ \varphi\in\mathcal{B}(\Sigma), \ x\in X.
\]
In particular,
\[
(S\varphi)x = U(\varphi x), \ \varphi\in\mathcal{C}(\Omega), \ x\in X,
\]
meaning that $U^{\#} = S$. It remains to show that ${\rm ran}\,U \subset Y$. From the last equality, we see that $U(\varphi x)\in Y$ for all $\varphi\in\mathcal{C}(\Omega)$ and $x\in X$. But, as is well known,
\[
\mathcal{C}(\Omega, X) = \overline{\mbox{span}} \{\varphi x\,:\,\varphi\in\mathcal{C}(\Omega), x\in X \}
\]
(see, e.g., \cite[p. 49]{R}; this is, in fact, the main argument in the proof of Grothendieck's description $\mathcal{C}(\Omega, X) = \mathcal{C}(\Omega)\hat{\otimes}_{\varepsilon}X)$.
\end{proof}

\section{Examples} \label{s4}

In Corollary \ref{p:cotype2}, we proved that, for all Banach spaces $X$ and $Y$, with $X^{*}$ being of cotype 2, and for every $S\in\mathcal{L}(\mathcal{C}(\Omega),\mathcal{L}(X, Y))$ and $p\leq2$, there exists an operator $U\in\mathcal{L}(\mathcal{C}_{p}(\Omega, X),Y)$ such that $U^{\#}=S$. The next example proves that this result does not hold for $p>2$.

\begin{exam} \label{ex:cotype2NO}
There exists an operator $S\in\mathcal{L}(\mathcal{C}([0,1]),\mathcal{L}(\ell_{2},\ell_{2}))$ such that, for each $p>2$, there does not exist any operator $U\in\mathcal{L}(\mathcal{C}_{p}([0,1],\ell_{2}),\ell_{2})$ such that $U^{\#} = S$.

\begin{proof}
Consider an operator $S\in\mathcal{L}(\mathcal{C}([0,1]),\mathcal{L}(\ell_{2},\ell_{2}))$ defined by $(S\varphi)x = (x(n)\varphi(\delta_{n}))_{n}$, for $x=(x(n))_{n}\in\ell_{2}$, where $(\delta_{n})_{n=1}^{\infty}$ is the following sequence in $[0,1]$:
\[
\frac{1}{2}; \frac{1}{2^{2}}, \frac{3}{2^{2}}; \frac{1}{2^{3}}, \frac{3}{2^{3}}, \frac{5}{2^{3}}, \frac{7}{2^{3}}; \cdots; \frac{1}{2^{n}}, \frac{3}{2^{n}}, \cdots, \frac{2^{n}-1}{2^{n}}; \cdots
\]

By contradiction, let us suppose that there exist $p>2$ and $U\in\mathcal{L}(\mathcal{C}_{p}([0,1],\ell_{2}),\ell_{2})$ such that $U^{\#} = S$. By Theorem \ref{t:3equiv}, there exists a constant $c>0$ such that $\|((S\varphi_{n})x_{n})\|_{p'}^{w}\leq c$, for all $(x_{n})\subset B_{\ell_{2}}$ and $(\varphi_{n})\subset\mathcal{C}([0,1])$ with $\|(\varphi_{n})\|_{p'}^{w}\leq1$. Taking $(x_{n})=(e_{n})$, the unit vector basis in $\ell_{2}$, we get that $(S\varphi_{n})e_{n} = \varphi_{n}(\delta_{n})e_{n}$, $n\in\mathds{N}$. Therefore, $\sum_{n=1}^{\infty}|\langle \varphi_{n}(\delta_{n})e_{n}, y\rangle|^{p'}\leq c^{p'}$, for all $y=(y(n))\in B_{\ell_{2}}$, meaning that
\begin{equation} \label{eq:1}
\sum_{n=1}^{\infty}| \varphi_{n}(\delta_{n})y(n)|^{p'}\leq c^{p'}
\end{equation}
for all $y=(y(n))\in B_{\ell_{2}}$ and ($\varphi_{n})\subset\mathcal{C}([0,1])$ with $\|(\varphi_{n})\|_{p'}^{w}\leq1$.

Let us consider the functions of the classical Faber--Schauder basis, except the two first ones. Here are the first few of them:

  \begin{center}
    \begin{tabular}{c}
      \scalebox{0.4}{\includegraphics{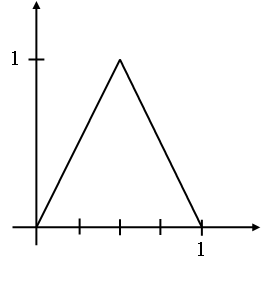}}
      \\
      $\varphi_{1}^{1}$
    \end{tabular}
    \\
    \begin{tabular}{c c c}
      \scalebox{0.4}{\includegraphics{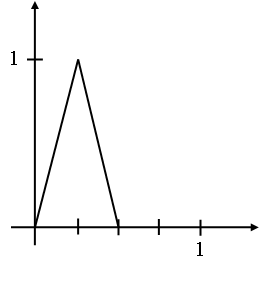}}
      &
      \ \ \ \ \ \
      &
      \scalebox{0.4}{\includegraphics{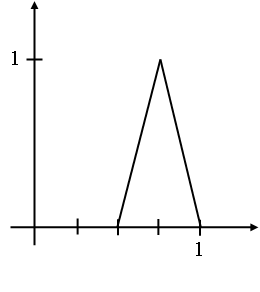}}
      \\
      $\varphi_{1}^{2}$
      &
      \,\,\,\,\,\,\,\,
      &
      $\varphi_{2}^{2}$
    \end{tabular}
    \\
    \begin{tabular}{c c c c}
      \scalebox{0.4}{\includegraphics{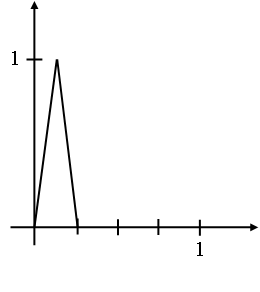}}
      &
      \scalebox{0.4}{\includegraphics{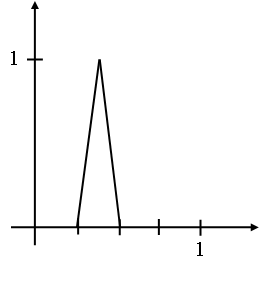}}
      &
      \scalebox{0.4}{\includegraphics{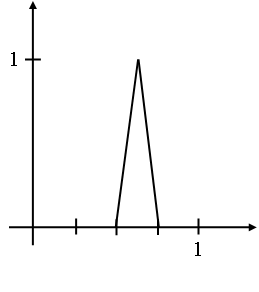}}
      &
      \scalebox{0.4}{\includegraphics{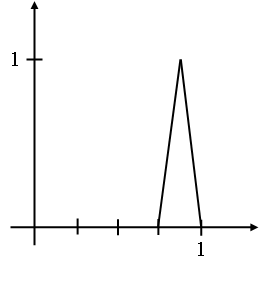}}
      \\
      $\varphi_{1}^{3}$
      &
      $\varphi_{2}^{3}$
      &
      $\varphi_{3}^{3}$
      &
      $\varphi_{4}^{3}$
    \end{tabular}
  \end{center}
In general, given $k\in\mathds{N}$, we take $\varphi_{n}^{k}$ with $n=1,2,\ldots,2^{k-1}$ defined by
\[
\varphi_{n}^{k}(t) = \left\{\begin{array}{ll}
1 - | 2^{k}t - 2n + 1|&\mbox{ if }\,\,\frac{2n-2}{2^{k}} \leq t \leq \frac{2n}{2^{k}},\\
0&\mbox{ otherwise.}\end{array}\right.
\]

Notice that $\varphi_{1}^{1}(\delta_{1})=1, \varphi_{1}^{2}(\delta_{2})=\varphi_{2}^{2}(\delta_{3})=1, \varphi_{1}^{3}(\delta_{4})=\ldots=\varphi_{4}^{3}(\delta_{7})=1$, and so on, and consider the sequences $(\psi_{n}^{k})_{n}$ defined as
\[
(\psi_{n}^{1})= (\varphi_{1}^{1},0,0,\ldots),
\]
\[
(\psi_{n}^{2})= (0,\varphi_{1}^{2},\varphi_{2}^{2},0,\ldots),
\]
\[
(\psi_{n}^{3})= (0,0,0,\varphi_{1}^{3},\varphi_{2}^{3},\varphi_{3}^{3},\varphi_{4}^{3},0,\ldots),
\]
and so on. In general,
\[
(\psi_{n}^{k})_{n}= (0,\ldots,0,\varphi_{1}^{k},\varphi_{2}^{k},\ldots,\varphi_{2^{k-1}}^{k},0,\ldots),
\]
where $\varphi_{1}^{k}$ is in the $(2^{k-1})$-th position. It is clear that $\|(\psi_{n}^{k})_{n}\|_{1}\leq1$ and therefore $\|(\psi_{n}^{k})_{n}\|_{p'}^{w}\leq1$ for all $k\in\mathds{N}$. Using (\ref{eq:1}) for $(\psi_{n}^{k})_{n}$, we obtain
\[
\sum_{n=2^{k-1}}^{2^{k}-1}|y(n)|^{p'}\leq c^{p'}, \mbox{ for all $k\in\mathds{N}$ and $y=(y(n))\in B_{\ell_{2}}$.}
\]
This is, however, impossible. Indeed, take
\[
y = (0, \ldots, 0, \frac{1}{\sqrt{2^{n}}}, \ldots, \frac{1}{\sqrt{2^{n}}},0, \ldots, 0),
\]
where $y$ is null except for the positions from $2^{k-1}$ to $(2^{k}-1)$. Obviously, $y\in B_{\ell_{2}}$ and
\[
\sum_{n=2^{k-1}}^{2^{k}-1}|y(n)|^{p'} = 2^{k-1}\Big(\frac{1}{\sqrt{2^{k-1}}}\Big)^{p'} = 2^{(k-1)(1-\frac{p'}{2})},
\]
which is certainly (much) bigger than some $c^{p'}$ for sufficiently large $k$.
\end{proof}
\end{exam}

In Corollary \ref{Uexist}, we proved that, for all Banach spaces $X$, $Y$, and $Z$, every $p$ with $1\leq p\leq\infty$, and every operator $S\in\mathcal{P}_{p'}(Z,\mathcal{L}(X,Y))$, there exists an operator $U\in\mathcal{L}(Z\hat{\otimes}_{d_{p}}X,Y)$ such that $U^{\#}=S$. The next example proves that the converse of this result, in general, does not hold. We use notation from Section \ref{s5} for $\Omega = [0,1]$: $\Sigma$ is the $\sigma$-algebra of Borel subsets of $[0,1]$ and $\mathcal{B}(\Sigma)$ is the Banach space of all bounded $\Sigma$-measurable functions on $[0,1]$.

\begin{exam} \label{ex:1}
There exists an operator $S\in\mathcal{L}(c_0, \mathcal{L}(\mathcal{B}(\Sigma), \mathcal{B}(\Sigma)))$ which is not absolutely $r$-summing for $1\leq r < \infty$ and such that, whenever $1\leq p\leq \infty$, there exists an operator $U\in\mathcal{L}(c_0\hat{\otimes}_{d_p}\mathcal{B}(\Sigma), \mathcal{B}(\Sigma))$ satisfying $U^{\#} = S$.

\begin{proof}
Denote by $(e_{n})$ the unit vector basis in $c_{0}$. Consider a linear operator $S:c_{0}\rightarrow \mathcal{L}(\mathcal{B}(\Sigma), \mathcal{B}(\Sigma))$ defined by $Se_{n} = T_{n}$ for all $n\in\mathds{N}$, where $T_{n}\in\mathcal{L}(\mathcal{B}(\Sigma), \mathcal{B}(\Sigma))$ is defined by $T_{n}\varphi = \varphi \chi_{(1/(n+1),1/n]}$, $\varphi\in \mathcal{B}(\Sigma)$.

Let $(\alpha_{n})\in c_{0}$. Then
\[
\|S(\alpha_{n})\| = \Big\| \sum_{n=1}^{\infty} \alpha_{n}T_{n} \Big\| = \sup_{\|\varphi\|\leq1} \Big\| \sum_{n=1}^{\infty} \alpha_{n}  \varphi  \chi_{(1/(n+1),1/n]}\Big\|.
\]
Let $\varphi\in\mathcal{B}(\Sigma)$ with $\|\varphi\|\leq1$. For every $t\in (0,1]$, there exists a unique $n_{0}\in\mathds{N}$ such that $t\in(1/(n_{0}+1),1/n_{0}]$. Then
\[
\Big| \sum_{n=1}^{\infty} \alpha_{n} \varphi(t) \chi_{(1/(n+1),1/n]}(t) \Big| = |\alpha_{n_{0}}\varphi(t)| \leq \|(\alpha_{n})\|\,\|\varphi\| \leq \|(\alpha_{n})\|
\]
(this inequality is trivially true for $t=0$). Thus $S$ is bounded.

In order to show that there exists an operator $U\in\mathcal{L}(c_{0}\hat{\otimes}_{d_{p}}\mathcal{B}(\Sigma),\mathcal{B}(\Sigma))$ such that $U^{\#} = S$, we check that $S$ satisfies condition (b) of Theorem \ref{t:3equiv}. Let $(\tilde{\alpha}_{n})_{n=1}^{N}\subset c_{0}$, with $\tilde{\alpha}_{n} = (\alpha_{n}(m))_{m}$, and $(\varphi_{n})_{n=1}^{N}\subset\mathcal{B}(\Sigma)$ be finite systems. Then (see, e.g., \cite[p. 35]{DJT} for the formula of $\|(\cdot)\|_{p'}^{w}$ below)
\[
\|((S\tilde{\alpha}_{n})\varphi_{n})\|_{p'}^{w} = \sup \Big\{ \Big\|\sum_{n=1}^{N} \gamma_{n} (S\tilde{\alpha}_{n})\varphi_{n}\Big\| \, : \, (\gamma_{n})\in B_{\ell_{p}^{N}}  \Big\}.
\]
For $(\gamma_{n})\in B_{\ell_{p}^{N}}$, notice that
\[
\Big\|\sum_{n=1}^{N} \gamma_{n} (S\tilde{\alpha}_{n})\varphi_{n}\Big\| = \Big\|\sum_{n=1}^{N} \gamma_{n} \sum_{m=1}^{\infty} \alpha_{n}(m) T_{m}\varphi_{n}\Big\|=
\]
\[
= \Big\|\sum_{n=1}^{N} \gamma_{n} \sum_{m=1}^{\infty} \alpha_{n}(m) \varphi_{n} \chi_{(1/(m+1),1/m]}\Big\|.
\]
But, again, for every $t\in (0,1]$, there exists a unique $m_{0}\in\mathds{N}$ such that $t\in(1/(m_{0}+1),1/m_{0}]$ (for $t=0$, the chain of inequalities below is trivially true), and we obtain
\[
\Big|\sum_{n=1}^{N} \gamma_{n} \sum_{m=1}^{\infty} \alpha_{n}(m) \varphi_{n}(t) \chi_{(1/(m+1),1/m]}(t)\Big| = \Big|\sum_{n=1}^{N} \gamma_{n}  \alpha_{n}(m_{0}) \varphi_{n}(t)\Big|
\]
\[
\leq \|(\varphi_{n})\|_{\infty}\Big|\sum_{n=1}^{N} \gamma_{n}  \alpha_{n}(m_{0}) \Big| \leq \|(\varphi_{n})\|_{\infty}\Big\|\sum_{n=1}^{N} \gamma_{n}  \tilde{\alpha}_{n} \Big\| \leq \|(\varphi_{n})\|_{\infty} \|(\tilde{\alpha}_{n})\|_{p'}^{w},
\]
yielding
\[
\|((S\tilde{\alpha}_{n})\varphi_{n})\|_{p'}^{w} \leq \|(\varphi_{n})\|_{\infty} \|(\tilde{\alpha}_{n})\|_{p'}^{w},
\]
meaning that condition (b) of Theorem \ref{t:3equiv} holds.

Finally, let $1\leq r <\infty$. Since $(e_{n})\in\ell_{1}^{w}(c_{0})$, also $(e_{n})\in\ell_{r}^{w}(c_{0})$. If now $S$ were an absolutely $r$-summing operator, then the sequence $(Se_{n}) = (T_{n})$ would be absolutely $r$-summing (see, e.g., \cite[Proposition 2.1, p. 34]{DJT}. This is however impossible, because $\|T_{n}\| = 1$ for all $n\in\mathds{N}$. Thus $S$ is not absolutely $r$-summing.
\end{proof}
\end{exam}

In Proposition \ref{p:condnecess}, for a given operator $S\in\mathcal{L}(\mathcal{C}(\Omega),\mathcal{L}(X,Y))$, we obtained a necessary condition, expressed in terms of the representing measure of $S$, for the existence of an operator $U\in\mathcal{L}(\mathcal{C}_{p}(\Omega,X),Y)$ such that $U^{\#}=S$. The next example proves that this condition, in general, is not sufficient.

\begin{exam} \label{ex:2}
Let $1< p<2$.  There exist a compact Hausdorff space $\Omega$ and an operator $S\in\mathcal{L}(\mathcal{C}(\Omega), \mathcal{L}(\ell_{p},\mathds{K}))$ such that its representing measure $m:\Sigma\rightarrow \mathcal{L}(\ell_{p},\mathds{K})$ verifies that
\[
\|(m(E_{i})x_{i})\|_{p'}\leq c, \mbox{ hence also } \|(m(E_{i})x_{i})\|_{p'}^{w}\leq c,
\]
for some constant $c>0$ and for every sequence $(E_{i})$ of pairwise disjoint sets in $\Sigma$ and every sequence $(x_{i})$ in  $B_{\ell_{p}}$. However, there does not exist any operator $U\in\mathcal{L}(\mathcal{C}_{p}(\Omega,\ell_{p}),\mathds{K})$ such that $U^{\#}=S$.

\begin{proof}
Kwapie\'{n} showed (see \cite[Theorem 7]{K} or, e.g., \cite[p. 208]{DJT}) that there exist bounded linear operators $S$ from $\ell_{\infty}$ to $\ell_{p'} = \mathcal{L}(\ell_{p},\mathds{K})$, $2<p'<\infty$, which are not absolutely $p'$-summing. Since $\ell_{\infty}$ can be identified with $\mathcal{C}(\Omega)$, where $\Omega$ is the Stone--\v{C}ech compactification of $\mathds{N}$, we have an operator $S\in\mathcal{L}(\mathcal{C}(\Omega),\ell_{p'})\setminus \mathcal{P}_{p'}(\mathcal{C}(\Omega),\ell_{p'})$.

The canonical identification $\mathcal{C}_{p}(\Omega,\ell_{p})^{*} = \mathcal{P}_{p'}(\mathcal{C}(\Omega),\ell_{p}^{*})$ identifies $U\in \mathcal{C}_{p}(\Omega,\ell_{p})^{*}$ with $U^{\#}\in\mathcal{P}_{p'}(\mathcal{C}(\Omega),\ell_{p'})$ (see Section \ref{s3}). Since our $S$ is not absolutely $p'$-summing, there does not exist $U\in\mathcal{C}_{p}(\Omega,\ell_{p})^{*} = \mathcal{L}(\mathcal{C}_{p}(\Omega,\ell_{p}),\mathds{K})$ such that $U^{\#} = S$.

On the other hand, thanks to Lindenstrauss and Pe{\l}czy\'{n}ski \cite{LP} and Maurey \cite{Ma} (see, e.g., \cite[Theorems 10.6 and 10.9]{DJT}), we know that $\mathcal{L}(\mathcal{C}(\Omega), \ell_{p'}) = \mathcal{P}_{(p',1)}(\mathcal{C}(\Omega), \ell_{p'})$ (here we use again that $2<p'<\infty$). Using that $S$ is an absolutely $(p',1)$-summing operator, we shall show that its representing measure $m:\Sigma\rightarrow \ell_{p'}$ verifies the above condition. Let $(E_{i})$ be a sequence of pairwise disjoint sets in $\Sigma$ and let $(x_{i})\subset B_{\ell_{p}}$. Then, the sequence $(\chi_{E_{i}}) \in \ell_{1}^{w}(\mathcal{B}(\Sigma))$ and $\|(\chi_{E_{i}})\|_{1}^{w}\leq 1$ (see the proof of Proposition \ref{p:condnecess} or Remark \ref{r:4.3}). As $m:\Sigma\rightarrow \ell_{p'} = (\ell_{p'})^{**}$ is the (classical) representing measure of $S$, the integration operator $\hat{S} \in \mathcal{L}(\mathcal{B}(\Sigma),\ell_{p'})$ coincides with $S^{**}|_{\mathcal{B}(\Sigma)}$. Since $S$ is absolutely $(p',1)$-summing, also $S^{**}$ is (see \cite[Theorem 3.4]{A} or, e.g., \cite[17.1.5]{P}), and hence $\hat{S}$ is, giving that
\[
\|(\hat{S}\chi_{E_{i}})\|_{p'} \leq \|\hat{S}\|_{\mathcal{P}_{(p',1)}}\|(\chi_{E_{i}})\|_{1}^{w} \leq \|\hat{S}\|_{\mathcal{P}_{(p',1)}} =: c.
\]
Therefore
\[
\|(m(E_{i})x_{i})\|_{p'} = \|((\hat{S}\chi_{E_{i}})x_{i})\|_{p'} \leq \|(\hat{S}\chi_{E_{i}})\|_{p'} \leq c.
\]
\end{proof}
\end{exam}

\section*{Acknowledgements}

Fernando Mu\~{n}oz wishes to acknowledge a partial support from institutional research funding IUT20-57 of the Estonian Ministry of Education and Research and the warm hospitality provided by Eve Oja and the members of her Functional Analysis team at the University of Tartu, where a part of this work was done in the spring of 2015. The research of Eve Oja was partially supported by Estonian Science Foundation Grant 8976 and by institutional research funding IUT20-57 of the Estonian Ministry of Education and Research. The research of C\'{a}ndido Pi\~{n}eiro and Fernando Mu\~{n}oz was partially supported by the Junta de Andaluc\'{\i}a P.A.I. FQM-276.



\end{document}